\theoremstyle{definition}
\newtheorem{The}{Theorem}
\newtheorem{Def}[The]{Definition}
\newtheorem{Cor}[The]{Corollary}
\newtheorem{Pro}[The]{Proposition}
\newtheorem{Lem}[The]{Lemma}
\newcommand{\E}{\mathbb{E}}
\newcommand{\lip}{{\rm Lip}}
\begin{document}

\title[Concentration inequalities for sequential dynamical systems]{Concentration inequalities for sequential dynamical systems of the unit interval}

\author{Romain Aimino}\address{
Dipartimento di Matematica, II Universit\`a di Roma (Tor Vergata), Via della Ricerca Scientifica, 00133 Roma, Italy.
}
\email{\href{mailto:aimino@mat.uniroma2.it}{aimino@mat.uniroma2.it}}
\urladdr{\url{http://www.mat.uniroma2.it/~aimino/}}

\author{J\'er\^{o}me Rousseau}\address{J\'er\^ome Rousseau, Departamento de Matem\'atica, Universidade Federal da Bahia\\
Av. Ademar de Barros s/n, 40170-110 Salvador, BA, Brazil}
\email{\href{mailto:jerome.rousseau@ufba.br}{jerome.rousseau@ufba.br}} 
\urladdr{\url{http://www.sd.mat.ufba.br/~jerome.rousseau}}

\date{\today}

\begin{abstract} We prove a concentration inequality for sequential dynamical systems of the unit interval enjoying an exponential loss of memory in the BV norm, and we investigate several of its consequences. In particular, this covers compositions of $\beta$-transformations, with all $\beta$ lying in a neighborhood of a fixed $\beta_{\star} > 1$ and systems satisfying a covering type assumption.
\end{abstract}

\maketitle

\tableofcontents

\section{Introduction}

Sequential dynamical systems consist of a composition of different maps from a space into itself, and model discrete-time non-autonomous dynamics, where the law of evolution is changing, possibly slowly, along the time. Such systems have been studied either from a topological point of view, with for instance the introduction of the notion of topological entropy \cite{KS96} and Devaney's chaos  \cite{ChSh09}, or from an ergodic theoretic point of view: \cite{BB84} has developed the notions of ergodicity and mixing, and \cite{K14} defined a measure theoretic entropy.

It is important to notice that in general, there is no common invariant measure for all maps, even though there is often a natural probability measure on the space, such as the Lebesgue measure, allowing to look at random processes that arise from observations. But this lack of invariance of the measure and the fact that maps change with the time imply that the processes are non-stationary, adding difficulties to their study. 

Up to now, the study of the statistical properties remains incomplete and the emphasis has been mainly put on the property of loss of memory, which is the generalization of the classical notion of decay of correlations to sequential systems: if we start from two probability densities, both belonging to a class of smooth enough functions, and if we let them evolve with the dynamic, they will be attracted one from each other if the system is sufficiently mixing. This leads to the idea that all smooth densities are attracted indifferently by the same moving target in the space of densities, whence the name of loss of memory. Many works have been devoted to the study of the speed of loss of memory. Among them, we can cite \cite{OSY09} for smooth expanding maps, \cite{OSY09} and \cite{Conze_Raugi} for one-dimensional piecewise expanding maps, \cite{GOT13} for multidimensional piecewise expanding maps, \cite{S11} for two-dimensional Anosov systems, \cite{SYZ13} for billiards with moving convex scatterers, \cite{
MO14} for one-dimensional open systems, and \cite{AHNTV} for maps of the interval with a neutral fixed point, the latter reference dealing with systems enjoying a polynomial loss of memory, whereas all the others examples exhibit an exponential speed of decorrelation.

Beyond the loss of memory, only the central limit theorem has been studied in \cite{Bakh95I,Bakh95II} for some classes of hyperbolic maps, and in \cite{Conze_Raugi} and \cite{NSV12} for one-dimensional piecewise expanding maps. It is worthy to note that \cite{Conze_Raugi} investigates also the strong law of large numbers and dynamical Borel-Cantelli lemmas. Very recently, the almost sure invariance principle has been studied in \cite{HNTV}

Despite this relative sparseness of the literature on statistical properties for non-autonomous systems, as written by Stenlund \cite{S11}: {\em "Much of the statistical theory of stationary dynamical systems can be carried over to sufficiently chaotic non-stationary systems."}

Following this philosophy, we address in this paper the question of proving concentration inequalities for sequential dynamical systems. The main object of concentration inequalities, a well known subject in probability theory, is to estimate the deviation from the mean of random variables of the form $Y_n = K(X_0, \ldots, X_{n-1})$ which depends in a smooth way of the variables $X_n$, even though the dependence could be complicated or implicit. Such observables arise naturally in statistical applications. Concentration inequalities are then an extension of large deviations inequalities for ergodic sums to more general observables. It should be noted that they are also non-asymptotic, in contrast to standard large deviation principles, however they do not give precise asymptotic, with sharp bounds. We refer the reader to \cite{BLM, Ledoux, McDi1, McDi2} among others for reviews of this field in a pure probabilistic setting.

Concentration inequalities were brought into a dynamical context in 2002 by Collet, Martinez and Schmitt \cite{CMS} who proved such an inequality for one-dimensional piecewise expanding maps of the interval. More precisely, they showed that if $T$ is a piecewise $C^2$ map of the unit interval which is uniformly expanding and topologically mixing, then the process $(T^n)_n$, defined on the probability space $([0,1], \mu)$, where $\mu$ is the unique absolutely continuous $T$-invariant measure, satisfies a concentration inequality, provided the density of $\mu$ is bounded away from $0$. Subsequently, a lot of works have followed, aiming to extend this result to non-uniformly expanding/hyperbolic systems \cite{CCSI, CCRV, CG}, or to random dynamical systems \cite{Mal12, ANV}.A rather complete description of the situation for systems modeled by a Young tower with an exponential or polynomial tail of the return time is given in \cite{CG} and \cite{GM}.

In this paper, we focus on the class of sequential dynamical systems described by Conze and Raugi \cite{Conze_Raugi}: they consist of compositions of piecewise expanding maps $(T_n)$ of the unit interval that enjoy an exponential loss of memory in the BV norm. Under this decorrelation assumption, together with a minoration hypothesis on the evolution of the Lebesgue measure $m$ under the dynamics, which was shown to hold in \cite{Conze_Raugi} for compositions of $\beta$-transformations with all $\beta$ very close, we proved that the process $( T_n \circ \ldots \circ T_1)_n$ defined on the probability space $([0,1], m)$ satisfies an exponential concentration inequality, and we discuss some of its consequences.
\\ ~

{\bf Outline of the paper.} In Section \ref{section:background}, we describe the class of sequential systems introduced by Conze and Raugi \cite{Conze_Raugi}, and discuss more in details our assumptions. In Section \ref{section:main}, we state and prove our main result: an exponential concentration inequalities for sequential dynamical systems. In Section \ref{section:app} we give several applications of concentration to the empirical measure, the shadowing and the almost sure central limit theorem. In Section \ref{secaltappr}, we prove a large deviation estimate for observables belonging to BV and an upper bound for the Kantorovich distance using a direct approach. The Appendix \ref{app:formula} is devoted to the proof of a formula used to establish the main result.

\section{Sequential dynamical systems of the unit interval} \label{section:background}

In this section, we recall some background on sequential dynamical systems. We follow closely the lines of Conze and Raugi \cite{Conze_Raugi}, and we refer to their paper for more details and results.

\subsection{Generalities on sequential dynamical systems}

Let $(X, \mathcal{F}, m)$ be a probability space. A sequential dynamical system on $(X,m)$ is a sequence $(T_n)_{n \ge 1}$ of non-singular transformations \footnote{Recall that a non-singular transformation on a probability space $(X, \mathcal{F}, m)$ is a measurable map $T : X \to X$ such that $T_{\star} m \ll m$.} on $(X, \mathcal{F}, m)$

For $n \le k$, we denote by $T_n^k$ the composition $T_n^k = T_k \circ \ldots \circ T_n$. For a non-singular map $T$, we denote by $P_T$ the transfer operator, it acts on $L^1(m)$, and satisfies for all $f \in L^1(m)$ and $g \in  L^{\infty}(m)$: 
$$\int P_T f . g \, dm = \int f .g \circ T \, dm.$$ 

We clearly have $P_{T_n^k} = P_{T_k} \circ \ldots \circ P_{T_n}$. For notational convenience, we will drop the letter $T$ when the sequence of transformations is understood, and denote by $P_n^k$ the operator $P_{T_n^k}$.

For a given sequential dynamical system $(T_n)$, we form the decreasing sequence of $\sigma$-algebras $$\mathcal{F}_n = (T_1^n)^{-1}(\mathcal{F}),$$ and define the asymptotic $\sigma$-algebra by $$\mathcal{F}_{\infty} = \bigcap_{n \ge 1} \mathcal{F}_n.$$

Let $f\in L^1(m)$. Since $P_1^n f(x) = 0$ a.s. on the set $\{P_1^n \mathds{1} = 0 \}$, we define the quotient $\frac{P_1^n f}{P_1^n \mathds{1}}$ as $0$ on $\{P_1^n \mathds{1} = 0 \}$. Then, we have the relation $$\mathbb{E}_m(f | \mathcal{F}_n) = \left( \frac{P_1^n f}{P_1^n \mathds{1}}\right) \circ T_1^n.$$

By Doob's convergence theorem for martingales, the sequence of conditional expectations $\mathbb{E}_m(f | \mathcal{A }_n)$ converges a.s. and in $L^1(m)$ to $\mathbb{E}_m(f|\mathcal{F}_{\infty})$. We say that the sequential dynamical system $(T_n)$ is exact when its associated asymptotic $\sigma$-algebra $\mathcal{F}_{\infty}$ is trivial modulo $m$. Equivalently, the system is exact if $\lim_n \| P_1^n f \|_{L^1_m} = 0$ for all $f \in L^1(m)$ with $\int f \, dm = 0$.

\subsection{A functional analytic framework}

In \cite{Conze_Raugi}, Conze and Raugi have extended the spectral theory of the iterates of a single operator to the case of concatenations of different operators, having in mind applications to sequential dynamical systems. We recall briefly the setting and the main results.

Let $(\mathcal{B}, \| . \|)$ be a Banach space, $\mathcal{V}$ be a subspace of $\mathcal{B}$ equipped with a norm $| . |_v$ such that $\| . \| \le | . |_v$. Let $\mathcal{P}$ be a set of contractions  of $(\mathcal{B}, \|.\|)$ \footnote{i.e. a set of linear operators $P : \mathcal{B} \to \mathcal{B}$ satisfying $\|P f \|  \le \|f \|$ for all $f \in \mathcal{B}$.} leaving $\mathcal{V}$ invariant, and satisfying 

\begin{enumerate}

\item[(H1)] The unit ball of $(\mathcal{V}, |.|_v)$ is relatively compact in $(\mathcal{B}, \|.\|)$.  

\item[(H2)] There is a countable family in $\mathcal{V}$ which is dense in $(\mathcal{B}, \|.\|)$.  

\item[(H3)] There are an integer $r \ge 1$ and constants $0 < \rho_r < 1$, $M_0, C_r > 0$ such that:$$ \forall P \in \mathcal{P}, \: |Pf|_v \le M_0 |f|_v, \: \forall f \in \mathcal{V};$$ and for all $r$-tuples $P_1, \ldots, P_r$ of operators in $\mathcal{P}$: $$\forall f \in \mathcal{V}, \: |P_r \ldots P_1 f |_v \le \rho_r |f|_v + C_r \|f\|.$$

\end{enumerate}

They consider mainly the case of one-dimensional systems, where $(X,\mathcal{F},m)$ is the unit interval $[0,1]$ endowed with its Borel $\sigma$-algebra and the Lebesgue measure, and where the maps $T$ are piecewise $C^2$ and uniformly expanding. In this setting, the class $\mathcal{P}$ will consist of the transfer operators of the class of maps considered, and the natural choice for the Banach spaces is to take $\mathcal{B} = L^1(m)$ and $\mathcal{V} = {\rm BV}$ the space of functions of bounded variations.
The previous assumptions imply that there exists $M > 0$ such that $|P_n \ldots P_1 f |_v \le M |f|_v$ for all $n \ge 1$, all $f \in \mathcal{V}$ and all choices of operators $P_1, \ldots, P_n$ in $\mathcal{P}$.

Let $\mathcal{V}_0$ be a subspace of $\mathcal{V}$ which is invariant by all operators in $\mathcal{P}$. It will consist in concrete applications of the functions in $\mathcal{V}$ with zero average with respect to $m$. We say that a sequence of operators $(P_n)$ in $\mathcal{P}$ is exact if for all $f \in \mathcal{V}_0$, $$\lim_{n} \|P_n \ldots P_1 f \| = 0.$$ A single operator $P \in \mathcal{P}$ is exact in $\mathcal{V}_0$ if the sequence given by $P_n = P$ is exact in $\mathcal{V}_0$. When $P_n$ is the transfer operator of a transformation $T_n$ and $\mathcal{V}_0$ is the set of functions in $\mathcal{V}$ with zero average, then exactness of the sequence $(P_n)$ in $\mathcal{V}_0$ implies exactness of the sequential dynamical system $(T_n)$ as defined previously, as easily deduced from the facts that $\mathcal{V}$ is dense in $(\mathcal{B}, \|.\|)$ and that all operators in $\mathcal{P}$ are contractions on $\mathcal{B}$.

\subsection{Exponential loss of memory}

It is well known since the work of Ionescu-Tulcea and Marinescu \cite{ITM} that each single operator in $\mathcal{P}$ is quasi-compact on $\mathcal{V}$ and enjoys a nice spectral decomposition, see Proposition 2.8 in \cite{Conze_Raugi} for a self-contained proof, and \cite{Bal00, BG_book, HH} among others for further properties. As a direct consequence, it follows that if an operator $P \in \mathcal{P}$ is exact in $\mathcal{V}_0$, then the norm of $P^n$ seen as an operator on $\mathcal{V}_0$ decays exponentially fast: there exist $K > 0$ and $\theta < 1$ such that $|P^n f|_v \le K \theta^n |f|_v$ for all $n \ge 1$ and all $f \in \mathcal{V}_0$.

For compositions of different operators, the situation is slightly more difficult, and we need more assumptions in order to get exponential decay. First we state the decorrelation property: 
\begin{description}
\item[{\em  (Dec)}] a subset $\mathcal{P}_0 \subset \mathcal{P}$ satisfies the decorrelation property in $\mathcal{V}_0$ if there exist $\theta < 1$ and $K > 0$ such that, for all integers $l \ge 1$, all $l$-tuples of operators $P_1, \ldots, P_l$ in $\mathcal{P}_0$: $$\forall f \in \mathcal{V}_0, \: |P_l \ldots P_1 f |_v \le K \theta^l |f|_v.$$
\end{description}
Conze and Raugi \cite{Conze_Raugi} have given a condition ensuring that {\em (Dec)} is verified. Rather than stating their condition, we give two corollaries from their paper.

The first one, which is of a local nature, states that any exact operator $P$ admits a convenient neighborhood for which {\em (Dec)} holds. More precisely, for two operators $P, P' \in \mathcal{P}$, define 
$$d(P,P') = \sup_{\{f \in \mathcal{V} \, : \, |f|_v \le 1\}} \|Pf - P'f\|,$$
and for $\delta  > 0$, denote $B(P, \delta) = \{P' \in \mathcal{P} \, : \, d(P, P') < \delta \}$.

Then, Proposition 2.10 from \cite{Conze_Raugi} asserts that for all $P \in \mathcal{P}$ exact in $\mathcal{V}_0$, there exists a $\delta_0 > 0$ such that $\mathcal{P}_0 = \mathcal{P} \cap B(P, \delta_0)$ satisfies {\em (Dec)} in $\mathcal{V}_0$.

The other corollary, which does not need any sort of closeness, requires a compactness condition: a subset $\mathcal{P}_0 \subset \mathcal{P}$ satisfies the compactness condition {\bf (C)} if for any sequence $(P_n)$ in $\mathcal{P}_0$, there exist a subsequence $(P_{n_j})$ and an operator $P \in \mathcal{P}_0$ such that $$\forall f \in \mathcal{B}, \: \lim_j \| P_{n_j} f - P f \| = 0.$$
Proposition 2.11 in \cite{Conze_Raugi} says that if $\mathcal{P}_0$ satisfies the compactness condition {\bf (C)} and is such that all sequences in $\mathcal{P}_0$ are exact in $\mathcal{V}_0$, then it satisfies the decorrelation property {\em (Dec)} in $\mathcal{V}_0$.

\subsection{Application to one-dimensional systems} 

We now describe how this theory applies to concrete situations, with piecewise expanding maps on $X = [0,1]$. More precisely, we consider maps $T : X \to X$ uniformly expanding, i.e. $ \lambda(T) := \inf_x |T'(x)| > 1$, and such that there exists a finite partition $\mathcal{A}_T$ of $X$ consisting of intervals with disjoint interiors, such that the map $T$ can be extended to a $C^2$ map on a neighborhood of each element of the partition. It is well know that the transfer operator $P_T : L^1(m) \to L^1(m)$ of $T$ satisfies $$P_T  f(x) = \sum_{Ty = x} \frac{f(y)}{|T'(y)|},$$  for all $f \in L^1(m)$, and is quasi-compact on the space ${\rm BV}$ of functions of bounded variation on $X$, see Baladi \cite{Bal00} or Boyarsky and G\'ora \cite{BG_book}. Recall that for $f \in L^1(m)$, we define $${\rm V}f = \inf \{\: {\rm var} \bar{f} \, : \, f = \bar{f} \, \, m{\rm -a.e.}\},$$ where $$ {\rm var} \bar{f} = \sup \left\{ \sum_{j=0}^{t-1} |\bar{f} (x_{j+1}) - \bar{f} (x_j) | \, : \, 0 = x_0 < \cdots < x_t = 1 \right\}.
$$
The space ${\rm BV}$ is equipped with the norm $\| . \|_{\rm BV} = {\rm V}(.) + \| . \|_{L^1_m}$ and is a Banach space whose unit ball is compact in $L^1(m)$.

The couple $(\mathcal{B}, \| . \|) = (L^1(m), \|.\|_{L^1_m})$ and $(\mathcal{V}, |.|_v) = ({\rm BV}, \|.\|_{\rm BV})$ satisfies (H1) and (H2). To ensure (H3), we first need to recall the so-called Lasota-Yorke inequality \cite{LY}: for any piecewise expanding map $T$ and any $f \in {\rm BV}$, we have $${\rm V}(P_T f) \le \frac{2}{\lambda(T)} {\rm V} f + C(T) \|f\|_{L^1_m},$$ with $C(T) = \sup_x \frac{|T''|}{|T'|^2} + 2 \sup_{I \in \mathcal{A}_T} \frac{\sup_{x \in I} |T'(x)|^{-1}}{m(I)}$.

Then, any class $\mathcal{C}$ of piecewise expanding maps $T$ for which $\sup_{T \in \mathcal{C}} C(T) < \infty$ and for which there exists a $r \ge 1$ such that $$\inf_{T_1, \ldots, T_r \in \mathcal{C}} \lambda(T_1^r) > 2, \: {\rm and } \: \sup_{T_1, \ldots, T_r \in \mathcal{C}} C(T_1^r) < \infty,$$ will satisfy (H3) with $\mathcal{P}$ the class of transfer operators of all maps in $\mathcal{C}$. From now on, we will say that
\begin{description}
\item[$\boldsymbol{(D_r)}$] a class $\mathcal{C}$ satisfies this condition if the corresponding set of transfer operators satisfies (H3) with this particular $r$. 
\end{description}

The class of maps described above hence satisfies $(D_r)$.

It is shown in \cite{Conze_Raugi} that the class of $\beta$-transformations, i.e. maps of the form $x\mapsto \beta x$ mod $1$, with $\beta \ge 1 + \alpha$, $\alpha > 0$, satisfies $(D_r)$ for some $r$ depending only on $\alpha$, see Theorem 3.4 c) therein. They also proved that this class of transformations is exact (Theorem 3.6), and that $d(P_1, P_2) \le C |\beta_1 - \beta_2|$ for some universal constant $C$, where $\beta_i > 1$ is arbitrary and $P_i$ is the transfer operator of the $\beta_i$-transformation, for $i=1,2$ (Lemma 3.9). 

As a direct consequence, the class of $\beta$-transformations, with $\beta$ in between $\beta_{\rm min} > 1$ and $\beta_{\rm max} < \infty$ satisfies the compactness condition {\bf (C)} and hence verifies the decorrelation property {\em (Dec)}.

\subsection{Limit theorems}

We state the two main limit theorems from \cite{Conze_Raugi}. Let $\mathcal{C}$ be a class of transformations that satisfies $(D_r)$ and {\em (Dec)}. Then, for any sequence of maps $(T_n)_{n \ge 1}$ in $\mathcal{C}$, we have the following strong law of large numbers (Theorem 3.7): for any $f \in {\rm BV}$ 
$$\lim_{n\rightarrow\infty} \frac{S_n}{n} =0, \: m {\rm -a.s.}$$ 
where $S_n = \sum_{k=0}^{n-1} \left[ f \circ T_1^k - \int f \circ T_1^k \, dm \right]$.

In order to have the corresponding central limit theorem, one more assumption is needed:
\begin{description}
\item[(Min)] the sequence of maps $(T_n)$ satisfies this condition if there exists a $\delta > 0$ such that $P_1^n \mathds{1} \ge \delta$ for all $n$. 
\end{description}
Note that this condition concerns only a particular sequence, and not the whole class $\mathcal{C}$.

Moreover, if $(T_n)$ be a sequence of maps which verifies  {\bf (Min)} and belongs to $\mathcal{C}$. Then (Theorem 5.1), for any $f \in {\rm BV}$, either $\|S_n\|_2$ is bounded and in this case $S_n$ is bounded almost surely, or $\|S_n\|_2$ is unbounded, and in this case, $\|S_n\|_2$ goes to $+\infty$ and we have 
$$\frac{S_n}{\|S_n\|_2}\overset{\mathcal{L}}{\rightarrow} \mathcal{N}(0,1).$$

\subsection{The minoration condition} 

In this section, we discuss the condition {\bf (Min)}. In the case of $\beta$ transformations, Conze and Raugi have shown that for all $\beta > 1$, there exists a neighborhood $[\beta-a, \beta+a]$ of $\beta$ such that if all maps $T_n$ are $\beta_n$-transformations, with $\beta_n \in [\beta -a, \beta +a]$, then {\bf (Min)} holds true.

We now give a condition that automatically ensures the validity of {\bf (Min)}. If $(T_n)$ is a sequential dynamical system on $[0,1]$ made of piecewise monotonic maps, we denote by $\mathcal{A}_n$ the partition of monotonicity of $T_n$, and by $\mathcal{A}_n^m$ the partition of $T_n^m$, $n \le m$.

\begin{Def} \em The sequential dynamical system $(T_n)$ is covering if 
\[ \forall n, \, \exists N(n) \: : \: \forall m, \, \forall I \in \mathcal{A}_{m+1}^{m+n}, \: \:  T_{m+1}^{m+N(n)}(I)= [0,1].\]
\end{Def}

Let $\mathcal{C}$ be a class of surjective piecewise expanding maps of $[0,1]$ that satisfies $(D_r)$ for some $r \ge 1$ and for which there exist $\lambda > 1$ and $C > 0$ such that $\inf |T'| \ge \lambda$ and $\sup |T'| \le C$ for all $T \in \mathcal{C}$.

\begin{Pro} \em Let $(T_n)$ be a sequential dynamical system belonging to a class $\mathcal{C}$ as before, and suppose that $(T_n)$ is covering. Then the condition {\bf (Min)} is satisfied.
\end{Pro}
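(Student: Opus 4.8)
The plan is to use the uniform bounded‑variation bound on $P_1^n\mathds{1}$ to produce, for every $n$, an interval of a \emph{fixed} positive length on which $P_1^n\mathds{1}$ is bounded below, and then to spread this lower bound over all of $[0,1]$ after a fixed number of extra iterates, using the covering property together with $\sup|T'|\le C$. For the first part, recall that $(D_r)$ gives an $M>0$ (which we may assume positive, else $P_1^n\mathds{1}\equiv1$) with ${\rm V}(P_1^n\mathds{1})\le\|P_1^n\mathds{1}\|_{\rm BV}\le M$ for all $n$. Since $P_1^n\mathds{1}\ge0$ and $\int P_1^n\mathds{1}\,dm=1$, passing to the representative of minimal variation gives $\operatorname{ess\,sup} P_1^n\mathds{1}\le 1+M$, hence the set $A_n=\{P_1^n\mathds{1}\ge\tfrac12\}$ satisfies $m(A_n)\ge c_0:=\frac{1}{2(1+M)}$. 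Cutting $[0,1]$ into $K:=\lceil 8M/c_0\rceil$ adjacent intervals of equal length $1/K$, additivity of the variation shows at most $4M$ of them carry variation $\ge\tfrac14$, so the remaining ones have total measure $\ge 1-\tfrac{4M}{K}\ge 1-\tfrac{c_0}{2}$; combined with $m(A_n)\ge c_0$, at least one of these, say $J_n$, meets $A_n$ in positive measure. For $x\in J_n$ and $x_0\in A_n\cap J_n$ one gets $P_1^n\mathds{1}(x)\ge P_1^n\mathds{1}(x_0)-{\rm V}(P_1^n\mathds{1};J_n)>\tfrac12-\tfrac14=\tfrac14$. Thus, with $\ell_0:=1/K$ and $\epsilon_0:=\tfrac14$ (both depending only on $\mathcal{C}$), for every $n\ge0$ there is an interval $J_n$ with $m(J_n)=\ell_0$ and $P_1^n\mathds{1}\ge\epsilon_0$ a.e.\ on $J_n$.

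Next, since $\inf|T'|\ge\lambda>1$ for all $T\in\mathcal{C}$ and every $T$ maps $[0,1]$ into itself, each element of the monotonicity partition $\mathcal{A}_{m+1}^{m+k}$ has length at most $\lambda^{-k}$. Fix $k_0$ with $\lambda^{-k_0}<\ell_0/2$; then any interval of length $\ell_0$ contains a full element of $\mathcal{A}_{m+1}^{m+k_0}$, for every $m$. In particular $J_n$ contains some $I_n\in\mathcal{A}_{n+1}^{n+k_0}$. Applying the covering property with the integer $k_0$, there is $N=N(k_0)$, independent of $n$ (necessarily $N\ge k_0$), with $T_{n+1}^{n+N}(I_n)=[0,1]$. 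Using $P_1^n\mathds{1}\ge\epsilon_0\mathds{1}_{I_n}$ a.e., positivity of transfer operators, and $|(T_{n+1}^{n+N})'|\le C^N$ on every branch inside $I_n$, for a.e.\ $x\in[0,1]$ there is a preimage $y\in I_n$ of $x$ under $T_{n+1}^{n+N}$, so that
\[
P_1^{n+N}\mathds{1}=P_{n+1}^{n+N}\bigl(P_1^n\mathds{1}\bigr)\ \ge\ \epsilon_0\,P_{n+1}^{n+N}\mathds{1}_{I_n}\ \ge\ \epsilon_0\,C^{-N}.
\]
This yields $P_1^m\mathds{1}\ge\epsilon_0 C^{-N}$ for all $m\ge N$; and for $1\le m<N$, surjectivity of $T_1^m$ together with $\sup|T'|\le C$ gives directly $P_1^m\mathds{1}\ge C^{-m}\ge C^{-N}$. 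Hence {\bf (Min)} holds with $\delta:=\epsilon_0 C^{-N(k_0)}$.

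The only genuinely delicate step is the first one: turning the normalization $\int P_1^n\mathds{1}\,dm=1$ and the uniform bound ${\rm V}(P_1^n\mathds{1})\le M$ into a lower bound on an interval whose length does \emph{not} depend on $n$ — a priori the variation could be concentrated precisely where the mass sits. The pigeonhole argument balancing the number of high‑variation subintervals ($\le 4M$) against $m(A_n)$ ($\ge c_0$) is what breaks this and pins down the two scales $\ell_0,\epsilon_0$; everything afterwards is soft, relying only on positivity of the transfer operators, the chain rule, the covering hypothesis, and the elementary fact that an interval twice as long as the branches of $\mathcal{A}_{n+1}^{n+k_0}$ must contain one of them.
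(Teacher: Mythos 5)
Your proof is correct, and it follows the same overall strategy as the paper --- locate, for every $n$, an interval of fixed size on which $P_1^n\mathds{1}$ is bounded below, note that such an interval must contain a full element of $\mathcal{A}_{n+1}^{n+k_0}$, and then spread the bound to all of $[0,1]$ via the covering property and $\sup|T'|\le C$ --- but you implement the key first step differently. The paper works with the cone $\mathcal{E}_a=\{f \ge 0,\ f\neq 0,\ \|f\|_{\rm BV}\le a\int f\}$: the Lasota--Yorke estimate contained in $(D_r)$ shows $\mathcal{E}_a$ is invariant only under full blocks of $r$ transfer operators, which forces the decomposition $m=p_m r+q_m$ (the stray $q_m$ operators being absorbed by the crude surjectivity bound $P_1^{q_m}\mathds{1}\ge C^{-r}$), and then Liverani's Lemma 3.2 is applied to $g_m=P_{q_m+1}^{m}\mathds{1}\in\mathcal{E}_a$ to produce an element of $\mathcal{A}_{m+1}^{m+n_0}$ on which $g_m\ge\frac12$. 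You instead invoke the uniform bound $\|P_1^n\mathds{1}\|_{\rm BV}\le M$ (which the paper records as a consequence of (H3)) together with the normalization $\int P_1^n\mathds{1}\,dm=1$, and re-prove a Liverani-type statement by hand through the pigeonhole argument on a fixed grid; this avoids both the cone and the block decomposition, yields the slightly cleaner constant $\frac14 C^{-N(k_0)}$ in place of $\min\{C^{-N(n_0)},\tfrac12 C^{-(r+N(n_0))}\}$, and is self-contained modulo the usual care with BV representatives, which you handle adequately by working with a representative of minimal variation. Two cosmetic remarks: the parenthetical claim that necessarily $N(k_0)\ge k_0$ is neither justified nor needed, and your final choice of $\delta$ does cover the range $1\le m<N(k_0)$ since $\tfrac14 C^{-N(k_0)}\le C^{-m}$ there.
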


\begin{proof}
For $a > 0$, define \[\mathcal{E}_a =  \left\{f \in {\rm BV} \, : \, f \ge 0, \,  f \neq 0, \,\|f \|_{\rm BV} \le a \int f \right\}. \]
Recall from Lemma 3.2 in Liverani \cite{Liv95}, that given a partition $\mathcal{A}$ of $[0,1]$ made of intervals of length less than $\frac{1}{2a}$, for any $f \in \mathcal{E}_a$, there exists $I \in \mathcal{A}$ such that $f(x) \ge \frac{1}{2} \int f$ for all $x \in I$.
Since $\mathcal{C}$ satisfies $(D_r)$, for any $P_1, \ldots, P_r$ transfer operators associated to maps in $\mathcal{C}$ and any $f \in \mathcal{E}_a$, we have $$\|P_r \ldots P_1 f \|_{\rm BV} \le \rho_r \|f\|_{\rm BV} + C_r \|f \|_{L^1_m} \le \left( a \rho_r + C_r\right) \int f.$$
Hence, if we choose $a \ge \frac{C_r}{1 - \rho_r}$, we have $(P_r \ldots P_1) (\mathcal{E}_a) \subset \mathcal{E}_a$ for any choice of $P_1, \ldots, P_r$. In order to also have $\mathds{1 } \in \mathcal{E}_a$, we will actually choose $a = \max \{1, \frac{C_r}{1 - \rho_r} \}$.

Let now $(T_n)$ be a sequence of maps in $\mathcal{C}$ which is covering. For any $n \le m$, the diameter of $\mathcal{A}_n^m$ is less than $\lambda^{-(m-n+1)}$, so we can choose $n_0 \ge 1$ such that the diameter of $\mathcal{A}_{n+1}^{n+n_0}$ is less than $\frac{1}{2a}$ for all $n$.

Let $m \ge 0$. We have $P_1^{m+N(n_0)} \mathds{1} = P_{m+1}^{m+ N(n_0)} P_1^m \mathds{1}$. Write $m = p_m r + q_m$, with $0 \le q_m < r$. We have 
\begin{equation}\label{eqp1q}
 P_1^{q_m} \mathds{1}(x) = \sum_{T_1^{q_m}y=x} \frac{1}{|(T_1^{q_m})'(y)|} \ge C^{-q_m} \ge C^{-r},
\end{equation}
since all maps are surjective and have a derivative uniformly bounded by $C$. As a consequence, we have $P_1^{m+N(n_0)} \mathds{1} \ge C^{-r} P_{m+1}^{m+N(n_0)} g_m$, with $g_m = P_{q_m+1}^{m} \mathds{1}$. Since $P_{q_m+1}^{m} \mathds{1} $ is a concatenation of $p_m$ blocks of $r$ operators applied to a function in $\mathcal{E}_a$, we obtain that $g_m$ belongs to $\mathcal{E}_a$. There whence exists an interval $I_m \in \mathcal{A}_{m+1}^{m+n_0}$ on which $g_m \ge \frac{1}{2}$. This implies 
$$P_1^{m+N(n_0)} \mathds{1}(x) \ge \frac{C^{-r}}{2} P_{m+1}^{m+N(n_0)} \mathds{1}_{I_m}(x) = \frac{C^{-r}}{2} \sum_{T_{m+1}^{m+N(n_0)} y = x} \frac{\mathds{1}_{I_m}(y)}{|(T_{m+1}^{m+N(n_0)})'(y)|} \ge \frac{C^{-r}}{2}C^{-N(n_0)},$$
since for all $x \in [0,1]$, there exists a $y \in I_
m$ such that $T_{m+1}^{m+N(n_0)}y =x$, by the covering assumption. Using the same argument as in \eqref{eqp1q}, $P_1^{k} \mathds{1} \ge C^{-N(n_0)}$ for all $k < N(n_0)$ and this shows that {\bf (Min)} is satisfied with $\delta = \min \{C^{-N(n_0)}, \frac{C^{-(r+N(n_0))}}{2} \}$.
\end{proof}

\section{A concentration inequality} \label{section:main}

Let $(X,d)$ be a metric space. A function $K : X^n \to \mathbb{R}$ is said to be separately Lipschitz if for all $i = 0, \ldots, n-1$, there exists a constant $\lip_i(K)$ such that 
$$ \left| K(x_0, \ldots, x_{i-1}, x_i, x_{i+1}, \ldots, x_{n-1}) - K(x_0, \ldots, x_{i-1}, x_i', x_{i+1}, \ldots, x_{n-1}) \right| \le \lip_i(K) d(x_i, x_i')$$
for all points $x_0, \ldots, x_{n-1}, x_i'$ in $X$.

Let $(Z_n)_{n \ge 0}$ be a discrete time random process with values in $X$. We say that this process satisfies an exponential concentration inequality if there exists $C > 0$ such that for all $n \ge 1$ and all functions $K : X^n \to \mathbb{R}$ separately Lipschitz, one has $$\mathbb{E}(e^{K(Z_0, \ldots, Z_{n-1}) - \mathbb{E}(K(Z_0, \ldots, Z_{n-1}))}) \le e^{C \sum_{j=0}^{n-1} \lip_j^2(K)}.$$

This implies a large deviation estimate: for all $t > 0$, 

\[
\mathbb{P}\left( K(Z_0, \ldots, Z_{n-1}) - \mathbb{E}(K(Z_0, \ldots, Z_{n-1}) > t \right) \le e^{- \frac{t^2}{4C \sum_{j=0}^{n-1} \lip_j^2(K)}},
\]
which follows by optimizing over $\lambda > 0$ the inequality $\mathbb{P}( Y > t) \le e^{-\lambda t} \mathbb{E}(e^{\lambda Y})$, with $Y = K(Z_0, \ldots, Z_{n-1}) - \mathbb{E}(K(Z_0, \ldots, Z_{n-1})$ (e.g. \cite{CG}).

We will consider processes generated by sequential dynamical systems on the unit interval, that is process of the form $Z_n = T_1^n$, where $(T_n)_{n \ge 1}$ is a sequential dynamical system on $[0,1]$. These processes are defined on the probability space $([0,1], \mathcal{F}, m$), where $m$ is the Lebesgue measure defined on the Borel $ \sigma$-algebra, which is endowed with the usual distance $d(x,y) = |x - y|$.

The maps $T_n$ will belong to a class $\mathcal{C}$ that satisfies the following properties. \\ ~ \\
{\bf Assumptions on the class $\mathcal{C}$.}

$\mathcal{C}$ is a class of piecewise expanding maps on $[0,1]$ such that:

\begin{enumerate}

\item $\mathcal{C}$  satisfies $(D_r)$ for some $r \ge 1$ and {\em (Dec)}. 

\item There exist $\lambda > 1$ and $M \ge 1$ such that $\inf |T'| \ge \lambda$ and $\sup |T''|\le M$ for all $T  \in \mathcal{C}$.

\item There exists $N \ge 1$ such that $\sharp \mathcal{A}_T \le N$ for all $T  \in \mathcal{C}$.

\end{enumerate}

 ~ \\ The main result is:

\begin{The} \label{th:main} \em Let $(T_n)_{n \ge 1}$ be a sequence in $\mathcal{C}$ verifying the condition {\bf (Min)}. Then the process $Z_n = T_1^n$ satisfies an exponential concentration inequality.
\end{The}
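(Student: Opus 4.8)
The natural strategy is the martingale-difference method of Chazottes--Gouëzel \cite{CG}, adapted to the non-stationary setting. Fix $n \ge 1$ and a separately Lipschitz $K : [0,1]^n \to \R$. Set $Y = K(Z_0, \ldots, Z_{n-1}) - \E_m(K(Z_0,\ldots,Z_{n-1}))$ and decompose $Y = \sum_{k=0}^{n-1} D_k$ along the decreasing filtration $\mathcal{F}_k = (T_1^k)^{-1}(\mathcal{F})$, where $D_k = \E_m(Y | \mathcal{F}_k) - \E_m(Y | \mathcal{F}_{k+1})$. Here one must be careful about the direction of the filtration: since $\mathcal{F}_0 = \mathcal{F} \supset \mathcal{F}_1 \supset \cdots$, the sum telescopes as $\E_m(Y|\mathcal{F}_0) - \E_m(Y|\mathcal{F}_n) = Y - \E_m(Y|\mathcal{F}_n)$, so one either works with a reversed martingale or, more concretely, follows \cite{CG} and writes $K(Z_0,\ldots,Z_{n-1}) = \sum_k (\E_m(K|\mathcal{F}_{k}) - \E_m(K|\mathcal{F}_{k+1})) + \E_m(K|\mathcal{F}_n)$, handling the tail term $\E_m(K|\mathcal{F}_n)$ separately (it is close to a constant by exactness). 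The key analytic input is a pointwise bound on each increment: I expect to prove that
\[
\| D_k \|_{L^\infty(m)} \le C \sum_{j \ge 0} \min\bigl(\lip_{k+j}(K) \, \eta^{\,?}, \; \lip_{k+j}(K) \, K_0 \theta^{j}\bigr)
\]
for suitable constants, i.e. the influence of coordinate $k+j$ on the $k$-th martingale increment decays because (a) the future coordinates are functions of the past through the dynamics, and (b) one can replace $\E_m(\,\cdot\,|\mathcal{F}_{k})$ by integrating out, picking up the exponential loss of memory {\em (Dec)} together with {\bf (Min)} to control the ratios $\frac{P_1^n f}{P_1^n\mathds{1}}$ appearing in the conditional-expectation formula $\E_m(f|\mathcal{F}_n) = \bigl(\frac{P_1^n f}{P_1^n\mathds{1}}\bigr)\circ T_1^n$. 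Once $\|D_k\|_\infty \le C \sum_{j\ge 0}\theta^j \lip_{k+j}(K) =: b_k$ is established, the Azuma--Hoeffding bound $\E(e^{D_k}|\mathcal{F}_{k+1}) \le e^{b_k^2/2}$ (valid since $\E(D_k|\mathcal{F}_{k+1})=0$ and $D_k$ is bounded), applied iteratively, gives $\E(e^{Y}) \le e^{\frac12 \sum_k b_k^2}$, and Cauchy--Schwarz on $b_k^2 = (C\sum_j \theta^j \lip_{k+j})^2 \le C' \sum_j \theta^j \lip_{k+j}^2$ rearranges $\sum_k b_k^2 \le C'' \sum_j \lip_j^2(K)$, which is the claimed inequality.

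To make the increment bound quantitative I would estimate $D_k$ by a telescoping/coupling argument: write $\E_m(K|\mathcal{F}_k) - \E_m(K|\mathcal{F}_{k+1})$ as an integral of the difference of $K$ evaluated at two trajectories that agree on coordinates $0,\ldots,k-1$, differ at coordinate $k$, and are then driven forward by $T_{k+1}, T_{k+2}, \ldots$; the separately-Lipschitz property controls the coordinate-$k$ discrepancy directly by $\lip_k(K)$ times the diameter of a dynamical cylinder (hence by $\lip_k(K)$ since everything lives in $[0,1]$), while for $j \ge 1$ the coordinate-$(k+j)$ discrepancy between the two forward trajectories is, after conditioning, governed by how far apart the two conditional laws of $T_1^{k+j}$ have drifted — and this is exactly where the exponential loss of memory in BV, transferred to an $L^1$ (or Kantorovich) estimate via the standard inequality relating BV-distance of densities to transport distance, yields the factor $\theta^j$. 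The appearance of the BV norm of the relevant densities must be controlled uniformly in $n$ and in the choice of maps, which is guaranteed by the uniform bound $|P_n\cdots P_1 f|_v \le M|f|_v$ coming from (H3)/$(D_r)$, together with the $C^2$/bounded-branch hypotheses (2)--(3) on $\mathcal{C}$ needed to keep the Lasota--Yorke constants and the distortion under control; the lower bound {\bf (Min)} keeps the denominators $P_1^n\mathds{1}$ away from zero so the normalization is harmless. The formula promised for the Appendix \ref{app:formula} is presumably the exact expression for $D_k$ in terms of transfer operators that makes all of this rigorous.

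\textbf{Main obstacle.} The delicate point is obtaining the $L^\infty$ bound $\|D_k\|_\infty \le C\sum_{j\ge0}\theta^j\lip_{k+j}(K)$ with a constant $C$ uniform in $n$, $k$ and the sequence $(T_m)$: one has to simultaneously (i) handle the non-stationarity — the ``target'' density moves, so there is no fixed invariant measure to compare against and one must compare the two evolving conditional densities with each other rather than with an equilibrium; (ii) combine the BV-contraction {\em (Dec)} with {\bf (Min)} to bound the ratios $P_1^{k+j}f/P_1^{k+j}\mathds{1}$ in $L^\infty$, not merely in $L^1$, which is why the BV framework (controlling oscillation, hence sup-norm) rather than a purely $L^1$ mixing rate is essential; and (iii) keep the distortion estimates for the composed maps $T_{k+1}^{k+j}$ uniform, using assumptions (2) and (3). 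Summing the geometric series and then applying Cauchy--Schwarz to pass from $(\sum_j\theta^j\lip_{k+j})^2$ to $\sum_j\theta^j\lip_{k+j}^2$ is routine once the per-increment bound is in hand.
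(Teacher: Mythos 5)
Your scaffolding matches the paper's: both use the reverse (decreasing) filtration $\mathcal{F}_k=(T_1^k)^{-1}(\mathcal{F})$, a martingale-difference decomposition, Azuma--Hoeffding, and then Cauchy--Schwarz to pass from $(\sum\theta^j\lip)^2$ to $\sum\theta^j\lip^2$, with {\em (Dec)}, {\bf (Min)} and uniform distortion/BV control as the analytic inputs. (The paper does not truncate at $n$ and handle a tail term; it uses exactness and martingale convergence to write $K-\E_{\tilde m}(K)=\sum_{p\ge0}D_p$ outright, but that difference is cosmetic.) The genuine gap is in your central estimate: for the filtration you chose, the claimed bound $\|D_k\|_\infty\le C\sum_{j\ge0}\theta^j\lip_{k+j}(K)$, with decay in the \emph{future} index, is simply false. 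Take $K(x_0,\ldots,x_{n-1})=f(x_0)$ with $f$ Lipschitz: then $\lip_{k+j}(K)=0$ for all $k\ge1$, so your bound forces $D_k=0$ for $k\ge1$, whereas $D_k=K_k-K_{k+1}$ with $K_k=\bigl(P_1^k f/P_1^k\mathds{1}\bigr)\circ$(coordinate $k$) is generically nonzero and only decays like $\theta^k\lip_0(K)$. The correct estimate, which is the heart of the paper (Proposition \ref{prop:dpbound} via Lemma \ref{lem:eq}), has the decay in the \emph{past} lag: $|D_p|\le C\sum_{j=0}^{p}\rho^{p-j}\lip_j(K)$, i.e.\ the influence that fades is that of the coordinates being integrated out, not of the coordinates still to come.

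Relatedly, the mechanism you sketch to justify the bound cannot work: you propose coupling ``two forward trajectories'' driven by $T_{k+1},T_{k+2},\ldots$ and converting BV loss of memory into a transport estimate giving $\theta^j$ per future coordinate. But forward orbits of uniformly expanding maps separate exponentially, and under $\tilde m$ the future coordinates are deterministic functions of $x_k$; the influence of $x_{k+j}$ on $D_k$ does not decay at all --- and it does not need to, because in $K_p-K_{p+1}$ the future coordinates enter both terms identically and cancel up to a single $\lip_p(K)$ contribution. The actual proof instead telescopes $K$ in its \emph{first} coordinates, writes $K_p$ through the transfer-operator formula of Appendix \ref{app:formula}, and applies {\em (Dec)} to the zero-mean BV functions $f_i=P_1^iH_i$, whose BV norms must be bounded by $C\sum_{k\le i}\lambda^{-(i-k)}\lip_k(K)$; establishing that bound is where the uniform distortion control and the combinatorial Lemma \ref{lem:CMS} (hypotheses (2)--(3) on $\mathcal{C}$) are genuinely needed, and none of this is supplied by your outline. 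Once the increment bound is corrected to the past-decaying form, your final assembly (Azuma plus Cauchy--Schwarz giving $\sum_p\sup|D_p|^2\le C\sum_j\lip_j^2(K)$) goes through exactly as in the paper.
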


In order to prove the theorem, we will use the McDiarmid's bounded difference method \cite{McDi1, McDi2}, already used in \cite{CG} and \cite{CMS}, that we will adapt to the non-stationary case.

We first extend the function $K$ as a function $K : X^{\mathbb{N}} \to \mathbb{R}$ that depends only on the $n$ first coordinates. $X^{\mathbb{N}}$ is endowed with the product $\sigma$-algebra $\tilde{\mathcal{F}}$ and the probability measure $\tilde{m}$ which is the image of $m$ by the map $\Phi : X \to X^{\mathbb{N}}$ defined by $\Phi(x) = (T_1^nx)_{n \ge 0 }$, with the convention $T_1^0 = {\rm Id}$. With this notation, $\mathbb{E}_m(e^{K(Z_0, \ldots, Z_{n-1}) - \mathbb{E}_m(K(Z_0, \ldots, Z_{n-1})}) = \mathbb{E}_{\tilde{m}}(e^{K - \mathbb{E}_{\tilde{m}}(K)})$. Let $\tilde{\mathcal{F}}_p$ be the $\sigma$-algebra on $X^{\mathbb{N}}$ of events depending only on the coordinates $(x_k)_{k \ge p}$, and define $K_p = \mathbb{E}_{\tilde{m}}(K| \tilde{\mathcal{F}}_p)$. Since the sequential dynamical system $(T_n)$ is exact, as a consequence of the assumption {\em (Dec)}, the $\sigma$-algebra $\mathcal{F}_{\infty}$ is trivial mod $m$. Consequently, the $\sigma$-algebra $\tilde{\mathcal{F}}_{\infty} := \cap_{p \ge 0}
 \tilde{\mathcal{F}}_p$ is trivial mod $\tilde{m}$, since it is easy to check that $\Phi^{-1}(\tilde{\mathcal{F}}_{\infty}) \subset \mathcal{F}_{\infty}$. Hence, by Doob's convergence theorem for martingales, $K_p$ goes $\tilde{m}$-a.s. to $\mathbb{E}_{\tilde{m}}(K)$, and in all $L^q$, with $1 \le q < \infty$. We then have $ K - \mathbb{E}_{\tilde{m}}(K) = \sum_{p\ge 0} D_p$, where $D_p = K_p - K_{p+1}$. Using Azuma-Hoeffding inequality, see \cite{Az67}, we deduce the existence of an universal  constant $C > 0$ such that for all $P \ge 0$, $$\mathbb{E}_{\tilde{m}}(e^{\sum_{p=0}^P D_p}) \le e^{C \sum_{p=0}^P {\rm sup} |D_p|^2}.$$

It remains to bound $D_p$: 

\begin{Pro} \label{prop:dpbound} \em There exist $\rho < 1$ and $C > 0$ depending only on $(T_n)_{n \ge 1}$ such that for all $p$ and all $K$, one has $$ |D_p| \le  C \sum_{j=0}^p \rho^{p-j} \lip_j(K).$$
\end{Pro}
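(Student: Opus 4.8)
The plan is to transport $D_p=K_p-K_{p+1}$ to $[0,1]$ via the transfer operators, to reduce the bound on $\|D_p\|_\infty$ to an oscillation estimate for $g_p:=P_1^p\hat K/P_1^p\mathds 1$ along the fibres of $T_{p+1}$, to peel off the contribution of the $p$-th coordinate of $K$, and then to extract the factor $\rho^{\,p-j}$ from the decorrelation property \emph{(Dec)} by decomposing $K$ coordinate by coordinate. I will assume everything stated before, in particular the formula $\mathbb E_m(f\mid\mathcal F_q)=(P_1^qf/P_1^q\mathds 1)\circ T_1^q$, the uniform bound $\|P_1^q\mathds 1\|_{\rm BV}\le M$ coming from $(D_r)$, the fact that $\|P_1^q\mathds 1\|_{L^1_m}=1$, \textbf{(Min)} ($P_1^q\mathds 1\ge\delta$), and \emph{(Dec)}.

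\medskip\noindent\textbf{Step 1 (transport to $[0,1]$ and reduction to an oscillation).} Put $\hat K=K\circ\Phi\in L^\infty([0,1])$. Since on $\Phi(X)$ the $q$-th coordinate is exactly $T_1^q$, one has $\Phi^{-1}(\tilde{\mathcal F}_q)=\mathcal F_q$ mod $m$, so $K_q\circ\Phi=g_q\circ T_1^q$ with $g_q=P_1^q\hat K/P_1^q\mathds 1$, the denominators being $\ge\delta>0$. Writing $f=P_1^p\hat K$, $h=P_1^p\mathds 1$ and using $P_1^{p+1}=P_{p+1}\circ P_1^p$ gives $D_p\circ\Phi(x)=g_p(u)-\tfrac{P_{p+1}f}{P_{p+1}h}(T_{p+1}u)$ with $u=T_1^px$; since $\tfrac{P_{p+1}f}{P_{p+1}h}(T_{p+1}u)=\sum_{w:\,T_{p+1}w=T_{p+1}u}\alpha_w\,g_p(w)$ is a convex combination (weights $\alpha_w\propto h(w)/|T_{p+1}'(w)|$), we get $D_p\circ\Phi(x)=\sum_w\alpha_w\big(g_p(u)-g_p(w)\big)$ and hence
\[
\|D_p\|_\infty\ \le\ \sup\big\{\,|g_p(u)-g_p(w)|\ :\ T_{p+1}u=T_{p+1}w\,\big\}.
\]

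\medskip\noindent\textbf{Step 2 (the $p$-th coordinate).} For fixed $y$, $g_p(y)=(P_1^p\hat F_y)(y)/(P_1^p\mathds 1)(y)$ with $\hat F_y(z)=K(z,T_1z,\dots,T_1^{p-1}z,\,y,T_{p+1}y,\dots,T_{p+1}^{n-1}y)$. When $T_{p+1}u=T_{p+1}w$, the tuples $(y,T_{p+1}y,\dots,T_{p+1}^{n-1}y)$ at $y=u$ and $y=w$ agree except in their first entry, so $g_p(u)-g_p(w)=A+B$, where $|A|\le\lip_p(K)$ accounts for the change of the $p$-th argument of $K$, and $B=g^G(u)-g^G(w)$ with $g^G=P_1^pG/P_1^p\mathds 1$ for the \emph{fixed} function $G(z)=\widetilde K(z,T_1z,\dots,T_1^{p-1}z)$, $\widetilde K:X^p\to\mathbb R$ separately Lipschitz with $\lip_j(\widetilde K)=\lip_j(K)$ for $j\le p-1$. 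Thus $|B|\le V(g^G)$, and the proposition reduces to the key estimate $V(g^G)\le C\sum_{j=0}^{p-1}\rho^{\,p-j}\lip_j(K)$.

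\medskip\noindent\textbf{Step 3 (the key estimate).} This is where the formula of Appendix~\ref{app:formula} is used. Normalising $\widetilde K$ by an additive constant so that $\mathbb E_m[\widetilde K(Z,\dots,T_1^{p-1}Z)]=0$, one telescopes $\widetilde K$ over its first coordinates: with $\widetilde K^{[j]}(\xi_0,\dots,\xi_{p-1})=\widetilde K(\xi_0,\dots,\xi_{j-1},\ast,\dots,\ast)$ and $\Delta_j=\widetilde K^{[j+1]}-\widetilde K^{[j]}$ one has $G=\sum_{j=0}^{p-1}\Delta_j(z,T_1z,\dots,T_1^jz)$ up to a constant, with $\|\Delta_j\|_\infty\le\lip_j(K)$. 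Using $P_1^p=P_{j+1}^p\circ P_1^j$ and $P_1^j\big[\Delta_j(z,\dots,T_1^jz)\big]=\phi_j\cdot P_1^j\mathds 1$, where $\phi_j(\eta)=\mathbb E_m\big[\Delta_j(Z,\dots,T_1^{j-1}Z,\eta)\mid T_1^jZ=\eta\big]$ satisfies $\|\phi_j\|_\infty\le\lip_j(K)$, then subtracting the mean of $\phi_j\cdot P_1^j\mathds 1$ to produce $\psi_j\in\mathcal V_0$ and re-summing the resulting constants by Abel summation yields
\[
g^G\ =\ \text{(const)}\ +\ \sum_{j=0}^{p-1}\frac{P_{j+1}^{p}\psi_j}{P_1^p\mathds 1}\ +\ \sum_{j=1}^{p-1} b_j\,\frac{P_{j+1}^{p}\big(P_j\mathds 1-\mathds 1\big)}{P_1^p\mathds 1},
\]
where $b_j=\mathbb E_m[\widetilde K(Z,\dots,T_1^{j-1}Z,\ast,\dots,\ast)]$ satisfies $|b_j|\le\sum_{i\ge j}\lip_i(K)$. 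The Abel summation is the device that converts the non‑decaying ``mean'' terms into expressions built on $P_j\mathds 1-\mathds 1\in\mathcal V_0$: since $\psi_j\in\mathcal V_0$ as well, \emph{(Dec)} gives $\|P_{j+1}^p\psi_j\|_{\rm BV}\le K\theta^{\,p-j}\|\psi_j\|_{\rm BV}$ and $\|P_{j+1}^p(P_j\mathds 1-\mathds 1)\|_{\rm BV}\le K\theta^{\,p-j}\|P_j\mathds 1-\mathds 1\|_{\rm BV}\le K\theta^{\,p-j}(M+1)$, while \textbf{(Min)}, $\|P_1^q\mathds 1\|_{\rm BV}\le M$ and $\|P_1^q\mathds 1\|_{L^1_m}=1$ control all the denominators; using ${\rm BV}\hookrightarrow L^\infty$, each summand is then $\lesssim\theta^{\,p-j}\big(|b_j|+\|\psi_j\|_{\rm BV}\big)$. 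Summing the $b_j$-terms, $\sum_j\theta^{\,p-j}\sum_{i\ge j}\lip_i(K)=\sum_i\lip_i(K)\sum_{j\le i}\theta^{\,p-j}\lesssim\sum_i\theta^{\,p-i}\lip_i(K)$; and granting the bound $\|\psi_j\|_{\rm BV}\le C\sum_{i=0}^{j}\rho^{\,j-i}\lip_i(K)$ — which I expect to follow by induction on $p$, because $\phi_j$ is, for each value of its last argument, an object of exactly the same shape $P_1^j(\text{orbit function})/P_1^j\mathds 1$ at level $j$, with Lipschitz data controlled by $\lip_0(K),\dots,\lip_j(K)$ — one gets $\sum_j\theta^{\,p-j}\|\psi_j\|_{\rm BV}\lesssim\sum_j\theta^{\,p-j}\sum_{i\le j}\rho^{\,j-i}\lip_i(K)\lesssim\sum_i\rho^{\,p-i}\lip_i(K)$ for any fixed $\rho\in(\theta,1)$. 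Hence $V(g^G)\le C\sum_{j=0}^{p-1}\rho^{\,p-j}\lip_j(K)$, which combined with $|A|\le\lip_p(K)$ from Step~2 gives $|D_p|\le C\sum_{j=0}^{p}\rho^{\,p-j}\lip_j(K)$ (and trivially $D_p=0$ for $p\ge n$).

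\medskip\noindent\textbf{Main obstacle.} The routine part is Steps~1--2; the heart of the matter, and the purpose of Appendix~\ref{app:formula}, is to make the coordinate telescoping an \emph{exact} identity and, above all, to prove the bound $\|\psi_j\|_{\rm BV}\le C\sum_{i\le j}\rho^{\,j-i}\lip_i(K)$ \emph{uniformly in $p$, in $n$, in the number of branches $\#\mathcal A_{T_m}$, and in the other Lipschitz constants}. A naive expansion of $G$ along inverse orbits produces variation factors of order $(\#\mathcal A_{T})^{j}$, which would wreck the estimate; the only way around this is to apply the transfer operators before unfolding the orbit structure and to organise the argument so that it closes back on itself by induction on $p$, keeping the geometry of the inverse branches under control through hypotheses~(2) and~(3). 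Together with the Abel‑summation trick for the non‑decaying constants, this bookkeeping is the delicate core of the proof.
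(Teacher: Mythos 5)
Your Steps 1--2 and the decomposition in Step 3 follow essentially the same route as the paper: the paper proves Lemma \ref{lem:eq}, comparing $K_p(\underline{x})$ to the constant $\int K(y,T_1y,\ldots,T_1^{p-1}y,x_p,\ldots)\,dm(y)$ by telescoping $K$ over its first coordinates, writing the pieces as $f_j=P_1^jH_j$, applying \emph{(Dec)} to the zero-mean parts and {\bf (Min)} together with the uniform BV bound on $P_1^p\mathds{1}$ to the denominators, and the extra $\lip_p(K)$ from the $p$-th slot is absorbed exactly as in your Step 2. The problem is that your proof is not complete at its decisive point: the estimate you ``grant'', $\|\psi_j\|_{\rm BV}\le C\sum_{i\le j}\rho^{\,j-i}\lip_i(K)$ (equivalently ${\rm V}(f_j)\le C\sum_{i\le j}\lambda^{-(j-i)}\lip_i(K)$), is the heart of the proposition, and the mechanism you propose for it --- an induction on $p$ based on $\phi_j$ being ``an object of the same shape'' --- does not work as stated. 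The regularity of $\phi_j$ in its last variable (its variation) is precisely the unknown quantity; $\phi_j$ is not separately Lipschitz data of the same kind as $K$, so there is no induction hypothesis that the argument can close on, and you give no substitute estimate. As it stands, you have no bound on $|B|$, hence no bound on $|D_p|$.

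What the paper does at this point is a direct, non-inductive estimate of ${\rm V}(P_1^jH_j)$ via the geometry of inverse branches: a uniform distortion bound for $T_1^j$ (Lemma \ref{lem:dist}, using $\sup|T''|\le M$ and $\inf|T'|\ge\lambda$), the Collet--Martinez--Schmitt-type bound $\sum_{I\in\mathcal{A}_1^j}\sup_I\frac{1}{|(T_1^j)'|}\le C$ (Lemma \ref{lem:CMS}, which is where the uniform bound on $\sharp\mathcal{A}_T$ and the BV-boundedness of $P_1^p\mathds{1}$ are consumed), its corollary $\sum_{I}{\rm V}_I\bigl(\frac{1}{|(T_1^j)'|}\bigr)\le C$, and then the splitting ${\rm V}(f_j)\le {\rm I}_j+{\rm II}_j+{\rm III}_j$, where the middle term uses that $T_1^k(I)$ has length at most $\lambda^{-(j-k)}$ for $I\in\mathcal{A}_1^j$ to produce the weights $\lambda^{-(j-k)}\lip_k(K)$. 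This is exactly the ``delicate core'' you flag as the main obstacle, and it is where hypotheses (2) and (3) on the class $\mathcal{C}$ enter; identifying the obstacle is not the same as overcoming it, so there is a genuine gap. Two smaller remarks: your parenthetical claim that $D_p=0$ for $p\ge n$ is false ($K$ is not $\tilde{\mathcal{F}}_p$-measurable, and under $\tilde{m}$ the coordinates $x_k$, $k\ge p$, carry information about $x_0,\ldots,x_{n-1}$), though this is harmless since the general bound with $\lip_j(K)=0$ for $j\ge n$ gives the decay needed for the summation over $p$; and your $P_j\mathds{1}-\mathds{1}$ in the Abel-summation display should be $P_1^j\mathds{1}-\mathds{1}$, as in the paper's first error term.
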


This proposition, together with the Schwarz inequality, implies the desired concentration inequality, in the same manner as in \cite{CG}: indeed, we have $$\left(\sum_{j=0}^p \rho^{p-j} \lip_j(K) \right)^2 \le \left( \sum_{j=0}^p \rho^{p-j} \lip_j^2(K) \right) \left( \sum_{j=0}^p \rho^{p-j} \right) \le C \sum_{j=0}^p \rho^{p-j} \lip_j^2(K).$$ After summation over $p$, we get $\sum_{p=0}^P {\rm sup} |D_p|^2 \le C \sum_j \lip_j^2(K)$.

Proposition \ref{prop:dpbound} will follow immediately from the Lipschitz condition on $K$ and the following lemma:

\begin{Lem} \label{lem:eq} \em There exist $ \rho < 1$ and $C > 0$ depending only on $(T_n)_{n \ge 1}$, such that for all $K$, all $p$ and $\tilde{m}$-a.e. $\underline{x} = (x_0, x_1, \ldots ) \in X^{\mathbb{N}}$, one has $$\left| K_p(\underline{x}) - \int_X K(y, T_1 y, \ldots, T_1^{p-1} y, x_p, \ldots) \, dm(y) \right| \le C \sum_{j=0}^{p-1} \lip_j(K) \rho^{p-j}.$$
\end{Lem}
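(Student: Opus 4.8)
The plan is to compute $K_p(\underline{x}) = \mathbb{E}_{\tilde m}(K \mid \tilde{\mathcal F}_p)(\underline{x})$ explicitly and compare it to the "disintegrated" average in the statement. Since $K$ depends only on the first $n$ coordinates and $\tilde{\mathcal F}_p$ retains only the coordinates $(x_k)_{k \ge p}$, the conditional expectation integrates out the coordinates $x_0, \dots, x_{p-1}$ with respect to their conditional law given $x_p$. Under the measure $\tilde m = \Phi_\star m$, the coordinate $x_p$ is distributed as $(T_1^p)_\star m$, and the conditional distribution of $(x_0, \dots, x_{p-1})$ given $x_p = z$ is supported on the (finitely many) branches $y$ of $T_1^p$ with $T_1^p y = z$, with weights proportional to $1/|(T_1^p)'(y)|$ — equivalently, governed by the transfer operator $P_1^p$. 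So the first step is to write, for $\tilde m$-a.e.\ $\underline{x}$,
\begin{equation*}
K_p(\underline{x}) = \frac{ \bigl(P_1^p\, [\, y \mapsto K(y, T_1 y, \dots, T_1^{p-1} y, x_p, x_{p+1}, \dots)\, ] \bigr)(x_p)}{(P_1^p \mathds{1})(x_p)},
\end{equation*}
using the relation $\mathbb{E}_m(f\mid\mathcal F_p) = \bigl(\tfrac{P_1^p f}{P_1^p \mathds{1}}\bigr)\circ T_1^p$ recalled in Section \ref{section:background}, applied coordinatewise, together with the Fubini-type manipulation needed because here $f$ itself depends on $x_p, x_{p+1}, \dots$. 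Making this formula rigorous (the function inside $P_1^p$ depends on the tail $(x_p, \dots)$ which is exactly what we are conditioning on) is a measure-theoretic point that I expect is handled in the Appendix referenced as \ref{app:formula}; I would invoke it here. Write $h(y) = K(y, T_1 y, \dots, T_1^{p-1} y, x_p, x_{p+1}, \dots)$ for the integrand, with the tail frozen; the target quantity in the lemma is $\int_X h\, dm$, while $K_p(\underline x) = \frac{P_1^p h(x_p)}{P_1^p \mathds 1(x_p)}$.

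Now the estimate reduces to controlling $\bigl| \frac{P_1^p h (x_p)}{P_1^p \mathds 1(x_p)} - \int h\, dm \bigr|$. Write $\bar h = \int h\, dm$, so $h - \bar h \in \mathcal V_0$ (the zero-mean BV functions), and, using $\mathbf{(Min)}$ ($P_1^p \mathds 1 \ge \delta$), bound the difference by $\delta^{-1}\bigl| P_1^p(h - \bar h)(x_p)\bigr| + \bar h\, \delta^{-1}\bigl|P_1^p\mathds 1(x_p) - 1\bigr|$; since $\int P_1^p \mathds 1\, dm = 1$ the second term is also of the form $\delta^{-1}|P_1^p g|$ with $g = \mathds 1 - 1 \cdot(\text{something})$... more cleanly, just observe $\int(h - \bar h)dm = 0$ and estimate $\|P_1^p(h-\bar h)\|_{L^\infty} \le \|P_1^p(h - \bar h)\|_{\mathrm{BV}}$, then apply the decorrelation hypothesis $\mathbf{(Dec)}$: $\|P_1^p(h - \bar h)\|_{\mathrm{BV}} \le K\theta^p \|h - \bar h\|_{\mathrm{BV}} \le 2K\theta^p\|h\|_{\mathrm{BV}}$. (One should be slightly careful that $P_1^p$ is a concatenation of the given operators and $\mathbf{(Dec)}$ applies to any such concatenation acting on $\mathcal V_0$.) So everything comes down to bounding $\|h\|_{\mathrm{BV}}$, or rather $\mathrm V h$, in terms of the Lipschitz constants of $K$.

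The heart of the matter — and the step I expect to be the main obstacle — is the variation estimate $\mathrm V\bigl(y \mapsto K(y, T_1 y, \dots, T_1^{p-1} y, x_p, \dots)\bigr) \le C \sum_{j=0}^{p-1} \lip_j(K)\,\theta^{-?}$... no: we need the bound to come out as $\sum_{j=0}^{p-1}\lip_j(K)\,\rho^{p-j}$ \emph{after} multiplying by $\theta^p$, so what we actually want is $\mathrm V h \le C\sum_{j=0}^{p-1}\lip_j(K)\,\kappa^{j}$ for some $\kappa$, giving $\theta^p \cdot \kappa^j = \theta^{p-j}(\theta\kappa)^j$, and then we'd need $\theta \kappa \le 1$. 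The natural way to get variation is to partition $[0,1]$ into the monotonicity intervals of $T_1^{p-1}$ — there are at most $N^{p-1}$ of them by assumption (3) — on each such interval $J$ every map $y \mapsto T_1^j y$ is $C^2$ and monotone, so $h\restriction_J$ is Lipschitz with constant $\sum_{j=0}^{p-1}\lip_j(K)\sup|(T_1^j)'| \le \sum_j \lip_j(K)\,\Lambda^j$ where $\Lambda = \sup_{T\in\mathcal C}\sup|T'| < \infty$ (finite since $\sup|T''|\le M$ and the maps are piecewise $C^2$ on $[0,1]$ — actually one needs $\sup|T'|<\infty$, which follows from assumptions (2)–(3) plus surjectivity-type control, or should be added; I would note this). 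Summing the Lipschitz oscillation over the $\le N^{p-1}$ pieces and adding the jumps at the $\le N^{p-1}$ partition endpoints (each jump $\le 2\|K\|_\infty$ — but $\|K\|_\infty$ is not controlled!), one sees the subtlety: jumps at branch points must instead be bounded using that $K$ only changes by $\lip_j(K)\cdot|T_1^j y - T_1^j y'|$ across the \emph{small} gap at a discontinuity of $T_1^j$ for $j < p-1$, and these gaps shrink like $\lambda^{-(\text{remaining iterates})}$. Carefully bookkeeping the contributions of discontinuities introduced at each level $j$ — there are $\le N^{j+1}$ of them, each contributing $\lesssim \sum_{i>j}\lip_i(K)\lambda^{-(i-j)}\cdot(\dots)$, or more simply each level-$j$ discontinuity contributes an oscillation $\le \lip_j(K) \cdot \lambda^{0}$ via the jump in $T_1^j$ composed with $\lip_j$... — is precisely where the geometry enters, and the product $N^{j}\cdot\lambda^{-j}$-type terms must be reabsorbed. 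I would structure this as: fix $j$, count how the $j$-th coordinate $y\mapsto T_1^j y$ contributes to $\mathrm V h$, getting a bound $\lip_j(K)\cdot\mathrm V(T_1^j) \le \lip_j(K)\bigl(\Lambda^j + \#\{\text{discontinuities}\}\bigr) \le \lip_j(K)\,C N^{j+1}$ wait — this still grows. The resolution must be that after applying $\mathbf{(Dec)}$ with rate $\theta$, we need $\theta \cdot (C N)$-type quantities, and in fact the $r$-block structure of $(D_r)$/$\mathbf{(Dec)}$ gives enough contraction; alternatively the correct variation bound uses the Lasota–Yorke structure so that $\mathrm V(h)$ does not see the full $N^p$ growth. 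I would therefore: (i) establish a clean lemma $\mathrm V\bigl(\varphi\circ T_1^j\bigr) \le C(\lambda,M,N,j)\,\lip(\varphi)$ with explicit dependence, (ii) apply it coordinatewise and sum over $j$, (iii) check that the resulting growth in $j$ is strictly slower than $\theta^{-1}$ so that setting $\rho = \max(\theta, \theta\cdot(\text{growth base}))^{?} < 1$ works — and if the naive bound is too lossy, refine using the $r$-fold contraction in $(D_r)$. That compatibility of the variation-growth rate with the decorrelation rate $\theta$ is the crux; the rest is bookkeeping with Fubini, $\mathbf{(Min)}$, and the elementary inequality $\|g\|_{L^\infty}\le \mathrm V g + \|g\|_{L^1} = \|g\|_{\mathrm{BV}}$.
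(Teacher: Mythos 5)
There is a genuine gap, and you have in fact put your finger on it yourself: your scheme requires a bound on $\|h\|_{\rm BV}$ for $h(y)=K(y,T_1y,\ldots,T_1^{p-1}y,x_p,\ldots)$, and no such bound compatible with the claimed estimate exists. The variation of $h$ genuinely grows exponentially in $p$: already for $K$ depending only on the coordinate $x_j$ through a Lipschitz $f$, one has $h=f\circ T_1^{j}$ and ${\rm V}(h)\gtrsim \lip(f)\,\lambda^{j}$ (the map $T_1^j$ has total variation at least of order $\lambda^j$). Applying \emph{(Dec)} globally over the whole window, as you propose, multiplies this by a single factor $\theta^p$, so coordinate $j$ contributes $\lip_j(K)\,\theta^{p-j}(\theta\Lambda)^j$ with $\Lambda=\sup|T'|$ (or worse, once branch counting enters); this decays only if $\theta\Lambda\le 1$, which is neither assumed nor true in general, and no refinement via the $r$-block structure or discontinuity bookkeeping removes this obstruction, because the loss is already present at the level of a single coordinate. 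Note also that your argument needs $\sup_{T\in\mathcal C}\sup|T'|<\infty$, which is not among the paper's hypotheses and is not needed in its proof.

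The paper's proof starts from the same conditional-expectation formula (your first display, proved in Appendix \ref{app:formula}) and uses \textbf{(Min)} and \emph{(Dec)} as you do, but the key idea you are missing is a telescoping decomposition combined with an early application of the transfer operator. One writes $K(y,\ldots,T_1^{p-1}y,x_p,\ldots)-K(x_\star,\ldots,x_\star,x_p,\ldots)=\sum_{i=0}^{p-1}H_i(y)$, where $H_i$ differs only in the $i$-th slot, so $\|H_i\|_{\sup}\le\lip_i(K)$ and $H_i$ depends on $y$ only through $y,\ldots,T_1^iy$. Setting $f_i=P_1^iH_i$, one has $K_p(\underline x)=K(x_\star,\ldots,x_\star,x_p,\ldots)+\frac{1}{P_1^p\mathds 1(x_p)}\sum_i P_{i+1}^pf_i(x_p)$, and \emph{(Dec)} is applied only over the remaining window, giving a factor $\theta^{p-i-1}$ against $\|f_i\|_{\rm BV}$. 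The crucial point is that ${\rm V}(f_i)$ does \emph{not} blow up: the weights $1/|(T_1^i)'|$ in $P_1^i$ exactly compensate the expansion, and using the uniform distortion bound (Lemma \ref{lem:dist}) together with the Collet--Martinez--Schmitt-type estimate $\sum_{I\in\mathcal A_1^i}\sup_I 1/|(T_1^i)'|\le C$ (Lemma \ref{lem:CMS}, which needs only $\inf|T'|\ge\lambda$, $|T''|\le M$ and $\sharp\mathcal A_T\le N$), one gets $\|f_i\|_{\rm BV}\le C\sum_{k\le i}\lambda^{-(i-k)}\lip_k(K)$, since the oscillation of $H_i$ on a branch $I\in\mathcal A_1^i$ is controlled by $\sum_k\lip_k(K)\,m(T_1^k(I))\le\sum_k\lip_k(K)\lambda^{-(i-k)}$. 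Summing $\theta^{p-i-1}\sum_{k\le i}\lambda^{-(i-k)}\lip_k(K)$ over $i$ then yields the claimed $C\sum_j\rho^{p-j}\lip_j(K)$. So the architecture you propose (formula from the appendix, \textbf{(Min)}, \emph{(Dec)}, then a BV estimate) is the right one, but the BV estimate must be performed on $P_1^iH_i$, not on $h$, and this is precisely the step your proposal leaves open.
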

The rest of this section is devoted to the proof of this lemma. 

Firstly, we remark that $$K_p(\underline{x}) = \frac{1}{P_1^p \mathds{1}(x_p)} \sum_{T_1^p y = x_p}  \frac{K(y, T_1 y, \ldots, T_1^{p-1} y, x_p, \ldots )}{| (T_1^p)'(y)|}.$$
The proof of this identity can be found in Appendix \ref{app:formula}.

We fix a $x_{\star} \in X$ and we decompose $K_p$ as $$K_p(\underline{x}) = K(x_{\star}, \ldots, x_{\star}, x_p, \ldots) + \frac{1}{P_1^p \mathds{1}(x_p)}\sum_{i=0}^{p-1} \sum_{T_1^p y = x_p} \frac{H_i(y)}{|(T_1^p)'(y)|},$$
where $H_i(y) = K(y, \ldots, T_1^i y, x_{\star}, \ldots, x_{\star}, x_p, \ldots) - K(y, \ldots, T_1^{i-1} y, x_{\star}, \ldots, x_{\star}, x_p, \ldots)$.

Using the chain rule, we obtain $K_p(\underline{x}) = K(x_{\star}, \ldots, x_{\star}, x_p, \ldots) + \frac{1}{P_1^p \mathds{1}(x_p)}\sum_{i=0}^{p-1} P_{i+1}^p f_i(x_p)$, with $$f_i(y) = \sum_{T_1^i z = y} \frac{H_i(z)}{|(T_1^i)'(z)|} = P_1^i H_i(y).$$

Remark that $\int f_i \, dm = \int H_i \, dm$, whence 
$$\sum_{i=0}^{p-1} \int f_i \, dm = \int K(y, \ldots, T_1^{p-1} y, x_p, \ldots) \, dm(y) - K(x_{\star}, \ldots, x_{\star}, x_p, \ldots).$$
Thus,
\begin{eqnarray*} 
\lefteqn{K_p(\underline{x}) - \int  K(y, T_1 y, \ldots, T_1^{p-1} y, x_p, \ldots) \, dm(y)} \\ &=& \sum_{i=0}^{p-1} \left( \frac{P_{i+1}^p \mathds{1}(x_p)}{P_1^p \mathds{1}(x_p)} - 1 \right) \int f_i \, dm + \frac{1}{P_1^p \mathds{1}(x_p)} \sum_{i=0}^{p-1}  \left( P_{i+1}^p f_i (x_p) - \left( \int f_i \, dm \right) P_{i+1}^p \mathds{1}(x_p) \right).
\end{eqnarray*}

For the first term, we have
$$\left| \frac{P_{i+1}^p \mathds{1}(x_p)}{P_1^p \mathds{1}(x_p)} - 1 \right| = \frac{\left| P_{i+1}^p \mathds{1}(x_p) - P_1^p \mathds{1}(x_p)\right|}{P_1^p \mathds{1}(x_p)} \le C \| P_{i+1}^p( P_1^i \mathds{1} - \mathds{1} )\|_{\rm BV},$$ 
where we used {\bf (Min)} and the fact that the {\rm BV} norm dominates the supremum norm. Moreover, from {\em (Dec)}, we get $\| P_{i+1}^p( P_1^i \mathds{1} - \mathds{1} )\|_{\rm BV} \le K \theta^{p-i-1} \|P_1^i \mathds{1} - \mathds{1} \|_{\rm BV} \le C \theta^{p-i-1}$ since $(P_1^i \mathds{1})_i$ is bounded in {\rm BV} by (H3). Finally, since $\left| \int f_i \, dm \right| \le \lip_i(K)$, we obtain
\begin{equation}\label{eqsumpmun}
\left| \sum_{i=0}^{p-1} \left( \frac{P_{i+1}^p \mathds{1}(x_p)}{P_1^p \mathds{1}(x_p)} - 1 \right) \int f_i \, dm \right| \le C \sum_{i=0}^{p-1} \theta^{p-i-1} \lip_i(K).
\end{equation}

For the second term, the hypothesis {\em (Dec)} allows us to estimate 
\begin{equation}\label{fimintfi}
\left\|P_{i+1}^p f_i - \left( \int f_i \, dm \right) P_{i+1}^p \mathds{1} \right\|_{\rm BV} \le K \theta^{p- i -1} \left\|f_i - \int f_i \, dm \right\|_{\rm BV} \le C \theta^{p-i-1} \|f_i\|_{\rm BV}.
\end{equation}

Using {\bf (Min)}, \eqref{fimintfi} and $\| . \|_{\rm sup} \le \| . \|_{\rm BV}$, we have 
\begin{equation}\label{eqsumfimint}
\left| \frac{1}{P_1^p \mathds{1}(x_p)} \sum_{i=0}^{p-1}  \left( P_{i+1}^p f_i (x_p) - \left( \int f_i \, dm \right) P_{i+1}^p \mathds{1}(x_p) \right) \right| \le C \sum_{i=0}^{p-1} \theta^{p-i-1} \| f_i\|_{\rm BV}.
\end{equation}

Thus, it remains to estimate $\|f_i\|_{\rm BV}$. One can observe that $\|f_i\|_{\rm BV} \le \|f_i\|_{\rm sup} + {\rm V}(f_i)$ and that for any $y$, we have 
\begin{equation}\label{eqfisup}
|f_i(y)| \le P_1^i \mathds{1}(y) \|H_i\|_{\rm sup} \le P_1^i \mathds{1}(y) \lip_i(K) \le C \lip_i(K),
\end{equation}
since $(P_1^i \mathds{1})_i$ is bounded in {\rm BV}. The crucial point hence lies in the estimate of the variation of $f_i$.

To do so we first establish a distortion control. The proof is standard, but we reproduce it here for completeness. If $T$ is a piecewise $C^2$ map of the interval, with partition of monotonicity $\mathcal{A}_T$, we define its distortion ${\rm Dist}(T)$ as the least constant $C$ such that $|T'(x) - T'(y)| \le C |T'(x)| |Tx - Ty|$ for all $x,y \in I$ and $I \in \mathcal{A}_T$. Note that $(1+C)^{-1}\le \frac{ |T'(x)| }{|T'(y)|} \le (1+C)$ for all $x,y \in I$.

\begin{Lem} \label{lem:dist} \em There exists $C > 0$ such that ${\rm Dist}(T_1^n) \le C$ for all $n$.
\end{Lem}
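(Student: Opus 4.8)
The plan is to control the distortion of the composition $T_1^n$ in terms of the distortions of the individual maps $T_k$, and then show that the latter are uniformly bounded thanks to assumptions (2) and the uniform expansion. First I would record the single-step bound: for a piecewise $C^2$ map $T$ with $\inf|T'| \ge \lambda > 1$ and $\sup|T''| \le M$, one has, on each element $I \in \mathcal{A}_T$, by the mean value theorem
\[
|T'(x) - T'(y)| \le M |x - y| \le \frac{M}{\lambda} |T'(\xi)|\,|x-y|
\]
for some $\xi$, and since $|Tx - Ty| \ge \lambda |x-y|$ (on the monotonicity interval), combining gives $|T'(x)-T'(y)| \le \frac{M}{\lambda^2}\,|T'(x)'|\,|Tx-Ty|$ after also comparing $|T'(\xi)|$ with $|T'(x)|$; so ${\rm Dist}(T) \le C_0$ for a constant $C_0 = C_0(\lambda, M)$ independent of $T \in \mathcal{C}$. (One has to be slightly careful replacing $|T'(\xi)|$ by $|T'(x)|$, which is exactly the kind of circularity one resolves by first getting a crude bound $|T'(x)|/|T'(y)| \le$ const on $I$ from $M$ and the length of $I$, or by a bootstrap; I will just invoke the standard statement.)

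Next I would exploit the chain rule $(T_1^n)'(z) = \prod_{k=1}^{n} T_k'(T_1^{k-1}z)$. For $z, w$ in the same element of $\mathcal{A}_1^n$ (the monotonicity partition of $T_1^n$), the points $T_1^{k-1}z$ and $T_1^{k-1}w$ lie in the same element of $\mathcal{A}_k$, so the single-step distortion applies at each stage. Writing $\log \frac{|(T_1^n)'(z)|}{|(T_1^n)'(w)|} = \sum_{k=1}^n \log \frac{|T_k'(T_1^{k-1}z)|}{|T_k'(T_1^{k-1}w)|}$ and using $\big|\log\frac{|T_k'(a)|}{|T_k'(b)|}\big| \le C_0 |T_k a - T_k b| = C_0 |T_1^k z - T_1^k w|$, I would then bound $|T_1^k z - T_1^k w| \le \lambda^{-(n-k)} |T_1^n z - T_1^n w| \le \lambda^{-(n-k)}$, using that $T_1^n$ expands by at least $\lambda^{n-k}$ between stages $k$ and $n$ and that $T_1^n$ maps the element into $[0,1]$, so the image points are at distance $\le 1$. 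Summing the geometric series gives
\[
\left| \log \frac{|(T_1^n)'(z)|}{|(T_1^n)'(w)|} \right| \le C_0 \sum_{k=1}^{n} \lambda^{-(n-k)} \le \frac{C_0}{1 - \lambda^{-1}} =: C_1,
\]
uniformly in $n$. Exponentiating yields $|(T_1^n)'(z)|/|(T_1^n)'(w)| \le e^{C_1}$, and then ${\rm Dist}(T_1^n) \le C$ follows: from $\big|\,|(T_1^n)'(z)| - |(T_1^n)'(w)|\,\big| \le |(T_1^n)'(z)|\,\big|1 - e^{\mp C_1}\big|$ one gets the bound with $|T_1^n z - T_1^n w|$ as soon as one reinserts the factor $\lambda^{-(n-k)}$ estimate more carefully — concretely, rerun the telescoping keeping $|T_1^n z - T_1^n w|$ as a factor: $\sum_k \lambda^{-(n-k)} |T_1^n z - T_1^n w| = \frac{1}{1-\lambda^{-1}} |T_1^n z - T_1^n w|$, so the Lipschitz-type estimate $\big|\log\frac{|(T_1^n)'(z)|}{|(T_1^n)'(w)|}\big| \le C_1 |T_1^n z - T_1^n w|$ holds, and together with the uniform upper bound $|(T_1^n)'(z)| \le e^{C_1}|(T_1^n)'(w)|$ this gives ${\rm Dist}(T_1^n) \le C_1 e^{C_1} =: C$.

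The main obstacle is purely bookkeeping rather than conceptual: making sure at each stage that the two orbit points genuinely stay in a common monotonicity element so that the $C^2$ estimate is legitimate (this is guaranteed by starting $z,w$ in the same element of $\mathcal{A}_1^n$), and handling the mild circularity in deriving the single-map distortion bound $C_0$ from $\sup|T''| \le M$ and $\inf|T'|\ge\lambda$. Assumption (3) ($\sharp \mathcal{A}_T \le N$) is not actually needed for this lemma, only (2); I would mention this in passing. Nothing here requires (Min) or (Dec).
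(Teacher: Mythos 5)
Your proposal is correct and follows essentially the same route as the paper: telescope $\log\frac{|(T_1^n)'(x)|}{|(T_1^n)'(y)|}$ via the chain rule, bound each term using that $\log|T_k'|$ is Lipschitz with constant $M/\lambda$ (so the circularity you worry about in the single-step bound never arises), use the backward contraction $|T_1^{k}x - T_1^{k}y| \le \lambda^{-(n-k)}|T_1^n x - T_1^n y|$ for points in the same element of $\mathcal{A}_1^n$, and sum the geometric series; the paper leaves the final exponentiation step converting the Lipschitz bound on the log-ratio into the stated distortion bound implicit, which you spell out. Your remarks that only assumption (2) is needed, not (3), \textbf{(Min)} or \emph{(Dec)}, are also accurate.
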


\begin{proof}

As previously, we denote by $\mathcal{A}_n$ the partition of monotonicity of $T_n$, and by $\mathcal{A}_n^m$ the partition of $T_n^m$, $n \le m$. We have for any $x,y \in I \in \mathcal{A}_1^n$: $$\left| \log \frac{|(T_1^n)'(x)|}{|(T_1^n)'(y)|}\right| \le \sum_{j=1}^n \left| \log |T_j'(T_1^{j-1} x) | - \log |T_j'(T_1^{j-1}y)|\right| \le C \sum_{j=1}^n |T_1^{j-1} x - T_1^{j-1} y|,$$ where we have used the fact that the Lipschitz constant of $\log |T_j'|$ is bounded independently of $j$, since $|T_j''| \le M$. As all maps are uniformly expanding by a factor at least $\lambda$, and $x, y$ belong to the same partition element, we have $|T_1^{j-1} x - T_1^{j-1}y| \le \lambda^{-(n-j-1)} |T_1^n x - T_1^n y|$. Since $\sum_j \lambda^{-(n-j-1)}$ is bounded, this concludes the proof.
\end{proof}

We will need the following technical lemma, which is adapted from Lemma II.4 in \cite{CMS}.

\begin{Lem} \label{lem:CMS} \em There exists a constant $C > 0$ such that for all $n \ge 1$, $$\sum_{I \in \mathcal{A}_1^n} \sup_I \frac{1}{|(T_1^n)'|} \le C.$$
\end{Lem}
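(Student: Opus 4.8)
The plan is to first use the distortion bound of Lemma \ref{lem:dist} to replace $\sup_I \frac{1}{|(T_1^n)'|}$ by the purely geometric quantity $\frac{m(I)}{m(T_1^n(I))}$, up to the multiplicative constant $1+D$, where $D := \sup_n {\rm Dist}(T_1^n) < \infty$: by the mean value theorem and bounded distortion one has $\frac{m(I)}{m(T_1^n(I))} \le \sup_I \frac{1}{|(T_1^n)'|} \le (1+D)\frac{m(I)}{m(T_1^n(I))}$ for every $I \in \mathcal{A}_1^n$. Call $I \in \mathcal{A}_1^n$ \emph{full} if $T_1^n(I) = [0,1]$. The full branches are harmless, since $\sum_{I \text{ full}} \sup_I \frac{1}{|(T_1^n)'|} \le (1+D)\sum_{I \text{ full}} m(I) \le 1+D$, so the whole problem reduces to controlling $\Sigma_n^{\rm bad} := \sum_{I \in \mathcal{A}_1^n,\, I \text{ non-full}} \frac{m(I)}{m(T_1^n(I))}$ uniformly in $n$.

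To this end I would fix $R \ge 1$ large enough that $\lambda^R > 2(1+D)$, group the maps into blocks $S_j := T_{(j-1)R+1}^{jR}$ (each $S_j$ is piecewise expanding on $[0,1]$ with $\inf|S_j'| \ge \lambda^R$, at most $N^R$ monotonicity intervals, and ${\rm Dist}(S_j) \le D$ by Lemma \ref{lem:dist}), and note that the partition of $S_1 \circ \cdots \circ S_n$ is exactly $\mathcal{A}_1^{nR}$. I then set up a one-block recursion. Passing from level $nR$ to level $(n+1)R$, a branch $\hat I \in \mathcal{A}_1^{nR}$ with image $J := T_1^{nR}(\hat I)$ is subdivided into the children $\hat I \cap (T_1^{nR})^{-1}(A)$, $A$ a monotonicity interval of $S_{n+1}$ with $m(A \cap J) > 0$; a child is full precisely when $A \subseteq J$ and $A$ is a full branch of $S_{n+1}$. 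Using $\sup_I \frac{1}{|(T_1^{(n+1)R})'|} \le \sup_{A \cap J}\frac{1}{|S_{n+1}'|}\cdot\sup_{\hat I}\frac{1}{|(T_1^{nR})'|}$ and summing over the children of $\hat I$, I bound the total weight of the full children of $\hat I$ by $(1+D)\,m(J)\,\sup_{\hat I}\frac{1}{|(T_1^{nR})'|}$ (via $\sum_{A \subseteq J} m(A) \le m(J)$ and distortion on each full $A$), and that of the non-full children by $\big(m(J) + \tfrac{2}{\lambda^R}\big)\sup_{\hat I}\frac{1}{|(T_1^{nR})'|}$, the term $m(J)$ accounting for the non-full branches $A \subseteq J$ (of which there are at most $\lambda^R m(J)$, since every branch of $S_{n+1}$ has length $\le \lambda^{-R}$, each contributing $\sup_A\frac{1}{|S_{n+1}'|} \le \lambda^{-R}$) and $\tfrac{2}{\lambda^R}$ for the at most two branches of $S_{n+1}$ straddling an endpoint of $J$.

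Summing over $\hat I \in \mathcal{A}_1^{nR}$, separating $\hat I$ according to whether it is full ($m(J)=1$) or not, and using $\sup_{\hat I}\frac{1}{|(T_1^{nR})'|} \le (1+D)\frac{m(\hat I)}{m(J)}$ together with $\sum_{\hat I} m(\hat I) = 1$, I obtain a recursion $\Sigma_{(n+1)R}^{\rm bad} \le b + a\,\Sigma_{nR}^{\rm bad}$ with $a = \tfrac{2(1+D)}{\lambda^R} < 1$ and $b = (2 + \tfrac{2}{\lambda^R})(1+D)$. Since $\Sigma_0^{\rm bad} = 0$, this yields $\sup_n \Sigma_{nR}^{\rm bad} \le \tfrac{b}{1-a} < \infty$, hence $\sup_n \Sigma_{nR} < \infty$ and, by the distortion comparison, $\sup_n \sum_{I \in \mathcal{A}_1^{nR}}\sup_I\frac{1}{|(T_1^{nR})'|} < \infty$. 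For an arbitrary $m$, writing $m = nR + s$ with $0 \le s < R$, every $I \in \mathcal{A}_1^m$ lies in a unique $\hat I \in \mathcal{A}_1^{nR}$ with $\sup_I \frac{1}{|(T_1^m)'|} \le \sup_{\hat I}\frac{1}{|(T_1^{nR})'|}$, and each $\hat I$ has at most $N^s \le N^R$ such descendants, so $\sum_{I \in \mathcal{A}_1^m}\sup_I\frac{1}{|(T_1^m)'|} \le N^R \sum_{\hat I \in \mathcal{A}_1^{nR}}\sup_{\hat I}\frac{1}{|(T_1^{nR})'|}$, which is bounded uniformly in $m$. This gives the lemma.

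The main obstacle is the bookkeeping in the recursion step: one must carefully separate full from non-full children and bound the number of non-full \emph{boundary} branches by a constant (namely $2$) while controlling the number and total weight of the non-full \emph{interior} branches through $m(J)$ alone. It is exactly to make the resulting contraction factor $a$ strictly smaller than $1$ that one must pass to windows of length $R$ with $\lambda^R > 2(1+D)$; a one-map recursion does not suffice when $\lambda$ is close to $1$.
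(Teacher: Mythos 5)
There is a genuine gap at the pivotal counting step of your recursion: the claim that the number of non-full branches $A$ of $S_{n+1}$ contained in $J$ is at most $\lambda^R m(J)$. The expansion hypothesis $\inf |S_{n+1}'| \ge \lambda^R$ bounds branch lengths from \emph{above} by $\lambda^{-R}$; it gives no lower bound, so this is a non sequitur. Nothing in the assumptions on $\mathcal{C}$ (at most $N$ branches, $\inf|T'|\ge\lambda$, $\sup|T''|\le M$ --- note there is no upper bound on $|T'|$ and no lower bound on branch lengths or on the lengths of images of non-full branches) prevents an interval $J$ of arbitrarily small measure from containing up to $N^R$ branches of the block: already the first map of the block may have its partition points accumulating inside $J$, or a tiny sub-branch of $J$ may be expanded onto a large interval which the remaining $R-1$ maps cut into many pieces, all pulling back inside $J$. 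With the only available count, $\#\{A \subseteq J\} \le N^R$, the interior non-full children of a non-full $\hat I$ contribute on the order of $N^R \lambda^{-R} \sup_{\hat I} \frac{1}{|(T_1^{nR})'|}$ rather than $m(J)\sup_{\hat I} \frac{1}{|(T_1^{nR})'|}$, so after summing over non-full $\hat I$ your recursion coefficient becomes $a \approx (1+D)(N/\lambda)^R$, which cannot be made smaller than $1$ by enlarging $R$ whenever $N > \lambda$ (the typical case, since $\lambda$ may be close to $1$). The contraction therefore fails exactly where it is needed; the full-children and boundary-children estimates, and the final reduction from $nR+s$ to $nR$, are fine, but they do not save the argument.

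The failure is structural rather than cosmetic: your proof uses only expansion, distortion and the branch-count bound, and never invokes $(D_r)$/(H3), whereas the lemma in the paper rests on that hypothesis in an essential way. The paper's proof assigns to each $I \in \mathcal{A}_1^n$ the first time $p$ at which $T_1^p(I)$ meets $\partial\mathcal{A}_{p+1}$, uses distortion to get $\sup_I \frac{1}{|(T_1^n)'|} \le C^{-1}\lambda^{-(n-p)}\frac{1}{|(T_1^p)'(a)|}$ with $T_1^p a = b \in \partial\mathcal{A}_{p+1}$, and then controls the resulting sum over preimages by $\sum_{T_1^p a = b} \frac{1}{|(T_1^p)'(a)|} = P_1^p\mathds{1}(b) \le \|P_1^p\mathds{1}\|_{\rm BV} \le M$, the uniform BV bound furnished by (H3), together with $\sharp\partial\mathcal{A}_{p+1} \le 2N$ and the geometric factor $\lambda^{-(n-p)}$. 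It is precisely this transfer-operator input that excludes the proliferation of short branches with small images which your purely geometric bookkeeping cannot rule out. To repair your scheme you would need an estimate of the same nature --- e.g. that the total weight $\sum_{A\subseteq J}\sup_A \frac{1}{|S_{n+1}'|}$ of the non-full branches of a block inside $J$ is at most a constant times $m(J)$, which amounts to a uniform lower bound on images of branches or to the Lasota--Yorke/BV control again --- at which point you are essentially re-deriving the paper's argument.
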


\begin{proof}
For any $I \in \mathcal{A}_1^n$, there exists a least integer $p = p_I \le n-1$ such that $T_1^p(I) \cap \partial \mathcal{A}_{p+1} \neq \emptyset$.  Denote by $\mathcal{A}_1^{n,p}$ the set of all $I \in \mathcal{A}_1^n$ for which $p_I = p$. Consider $I \in \mathcal{A}_1^{n,p}$. There exists $a \in \partial I$ such that $b = T_1^p a \in \partial \mathcal{A}_{p+1}$. From Lemma \ref{lem:dist}, we deduce that there exists $C > 0$ such that for any $x \in I$, $$|(T_1^n)'(x)| \ge C |(T_1^n)'(a)| = C |(T_{p+1}^n)'(T_1^p a)| | (T_1^p)'(a)| \ge C \lambda^{n-p} |(T_1^p)'(a)|.$$

Hence, one has $\sup_I \frac{1}{|(T_1^n)'|} \le C^{-1} \lambda^{-(n-p)} \frac{1}{|(T_1^p)'(a)|}$. Since a pre-image by $T_1^p$ of an element $b \in \mathcal{A}_{p+1}$ can only belong to at most two different $I  \in \mathcal{A}_1^n$, it follows that 

\begin{eqnarray*}
\sum_{I \in \mathcal{A}_1^n} \sup_I \frac{1}{|(T_1^n)'|} &\le& 2 C^{-1} \sum_{p=0}^{n-1} \lambda^{-(n-p)} \sum_{b \in \partial \mathcal{A}_{p+1}} \sum_{T_1^p a = b} \frac{1}{|(T_1^p)'(a)|} \\ &=&  2 C^{-1} \sum_{p=0}^{n-1} \lambda^{-(n-p)} \sum_{b \in \partial \mathcal{A}_{p+1}} P_1^p \mathds{1} (b)  \le 2 C^{-1} \sum_{p=0}^{n-1} \lambda^{-(n-p)} \|P_1^p \mathds{1}\|_{\rm BV} \sharp \partial \mathcal{A}_{p+1}.
\end{eqnarray*}

This quantity is bounded independently of $n$, since $(P_1^p)_p$ is bounded in ${\rm BV}$, and the number of elements in $\mathcal{A}_{p+1}$ is bounded independently of $p$, by assumption.
\end{proof}

As a direct corollary of the two previous lemma, we have 

\begin{Cor} \label{cor:var} \em There exists $C > 0$ such that for all $n \ge 1$, $$\sum_{I \in \mathcal{A}_1^n} {\rm V}_I \left( \frac{1}{|(T_1^n)'|}\right) \le C,$$ where ${\rm V}_I(f)$ denotes the total variation of $f$ over the subinterval $I \subset [0,1]$.
\end{Cor}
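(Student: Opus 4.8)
\textbf{Proof proposal for Corollary \ref{cor:var}.}

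The plan is to estimate ${\rm V}_I\!\left(\frac{1}{|(T_1^n)'|}\right)$ on each monotonicity interval $I \in \mathcal{A}_1^n$ separately, bounding it by a constant times $\sup_I \frac{1}{|(T_1^n)'|}$, and then to sum over $I$ using Lemma \ref{lem:CMS}. The key point is that on the interior of each $I \in \mathcal{A}_1^n$ the map $T_1^n$ is $C^2$ and strictly monotone, so $\frac{1}{|(T_1^n)'|}$ has no jumps there and its oscillation is controlled by the distortion bound of Lemma \ref{lem:dist}.

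First I would fix $I \in \mathcal{A}_1^n$ and two points $x, y$ in the interior of $I$. Writing
\[
\left| \frac{1}{|(T_1^n)'(x)|} - \frac{1}{|(T_1^n)'(y)|}\right| = \frac{\bigl| \, |(T_1^n)'(y)| - |(T_1^n)'(x)| \, \bigr|}{|(T_1^n)'(x)| \, |(T_1^n)'(y)|} \le \frac{\left| (T_1^n)'(x) - (T_1^n)'(y) \right|}{|(T_1^n)'(x)| \, |(T_1^n)'(y)|},
\]
and applying ${\rm Dist}(T_1^n) \le C$ from Lemma \ref{lem:dist} in the form $\left| (T_1^n)'(x) - (T_1^n)'(y) \right| \le C |(T_1^n)'(y)| \, |T_1^n x - T_1^n y|$ — valid after enlarging $C$, since $|(T_1^n)'(x)|$ and $|(T_1^n)'(y)|$ are comparable on $I$ by the same lemma — one gets
\[
\left| \frac{1}{|(T_1^n)'(x)|} - \frac{1}{|(T_1^n)'(y)|}\right| \le \frac{C}{|(T_1^n)'(x)|}\, |T_1^n x - T_1^n y| \le C \left( \sup_I \frac{1}{|(T_1^n)'|} \right) |T_1^n x - T_1^n y|.
\]
Then, for any subdivision $x_0 < x_1 < \cdots < x_t$ of $I$, summing these inequalities and using that $T_1^n$ is monotone on $I$ gives
\[
\sum_{j=0}^{t-1} \left| \frac{1}{|(T_1^n)'(x_{j+1})|} - \frac{1}{|(T_1^n)'(x_j)|} \right| \le C \left( \sup_I \frac{1}{|(T_1^n)'|}\right) \sum_{j=0}^{t-1} |T_1^n x_{j+1} - T_1^n x_j| = C \left( \sup_I \frac{1}{|(T_1^n)'|}\right) |T_1^n x_t - T_1^n x_0|,
\]
which is at most $C \sup_I \frac{1}{|(T_1^n)'|}$ since $T_1^n(I) \subset [0,1]$. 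Taking the supremum over subdivisions yields ${\rm V}_I\!\left(\frac{1}{|(T_1^n)'|}\right) \le C \sup_I \frac{1}{|(T_1^n)'|}$.

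Finally I would sum over $I \in \mathcal{A}_1^n$, obtaining $\sum_{I \in \mathcal{A}_1^n} {\rm V}_I\!\left(\frac{1}{|(T_1^n)'|}\right) \le C \sum_{I \in \mathcal{A}_1^n} \sup_I \frac{1}{|(T_1^n)'|} \le C$ by Lemma \ref{lem:CMS}, with a constant independent of $n$. There is essentially no obstacle here; the only thing to be careful about is that the variation is taken over the open subinterval $I$, on which $T_1^n$ is genuinely $C^2$ and monotone, so that the telescoping of $\sum_j |T_1^n x_{j+1} - T_1^n x_j|$ into $|T_1^n x_t - T_1^n x_0|$ is legitimate and no boundary jump of $\frac{1}{|(T_1^n)'|}$ is counted. (Alternatively, one could note that Lemma \ref{lem:dist} gives $|(T_1^n)''| \le C |(T_1^n)'|^2$ pointwise, whence $\bigl|\bigl(\tfrac{1}{|(T_1^n)'|}\bigr)'\bigr| \le C$ on $I$ and ${\rm V}_I\!\left(\frac{1}{|(T_1^n)'|}\right) \le C\, m(I)$, but the argument above is the one that uses both preceding lemmas.)
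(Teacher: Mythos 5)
Your proof is correct and follows essentially the same route as the paper: rewrite the difference of reciprocals as a quotient, apply the distortion bound of Lemma \ref{lem:dist}, use the monotonicity of $T_1^n$ on $I$ to telescope $\sum_j |T_1^n x_{j+1} - T_1^n x_j| \le 1$, obtain ${\rm V}_I\bigl(\tfrac{1}{|(T_1^n)'|}\bigr) \le C \sup_I \tfrac{1}{|(T_1^n)'|}$, and conclude with Lemma \ref{lem:CMS}. The only difference is cosmetic (your explicit comparability remark when swapping the roles of $x$ and $y$ in the distortion inequality), which the paper handles implicitly by applying the distortion bound with the appropriate base point.
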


\begin{proof}
Let $x_0 < \ldots < x_j$ be a sequence of elements of $I$. Then 

\begin{eqnarray*} \sum_{i=0}^{j-1} \left| \frac{1}{|(T_1^n)'(x_{i+1})|} - \frac{1}{|(T_1^n)'(x_i)}\right| &=& \sum_{i=0}^{j-1} \frac{\big\vert \left|(T_1^n)'(x_{i+1})\right| - \left|(T_1^n)'(x_i)\right|\big\vert}{|(T_1^n)'(x_i)| |(T_1^n)'(x_{i+1})|} \\ &\le& \sum_{i=0}^{j-1} \frac{{\rm Dist}(T_1^n) |T_1^n x_{i+1} - T_1^n x_i|}{|(T_1^n)'(x_i)|} \le C \sup_I \frac{1}{|(T_1^n)'|},
\end{eqnarray*} 
where for the last inequality, we have used Lemma \ref{lem:dist} and the fact that $(T_1^n x_i)_i$ is a monotone sequence in $[0,1]$. A direct application of Lemma \ref{lem:CMS} proves the corollary.\end{proof}

We are now able to estimate ${\rm V}(f_i)$. For $I \in \mathcal{A}_1^i$, we denote by $S_{i,I}$ the inverse branch of the restriction of $T_1^i$ to $I$. Then, we can write $$f_i = P_1^i H_i = \sum_{I \in \mathcal{A}_1^i} \left( \frac{H_i}{|(T_1^i)'|} \right) \circ S_{i, I} \, \mathds{1}_{T_1^i (I)}.$$ Using standard properties of the total variation, it follows that $${\rm V}(f_i) \le \sum_{I \in \mathcal{A}_1^i} {\rm V}_I \left( \frac{H_i}{|(T_1^i)'|} \right) + 2 \sum_{a \in \partial \mathcal{A}_1^i} \frac{|H_i(a)|}{|(T_1^i)'(a)|} \le {\rm I}_i + {\rm II}_i + {\rm III}_i,$$ where 

\[\begin{aligned}
&{\rm I}_i& &=& &\sum_{I \in \mathcal{A}_1^i} {\rm V}_I \left(\frac{1}{|(T_1^i)'|} \right) \sup_I |H_i|,&
\\
&{\rm II}_i& &=& &\sum_{I \in \mathcal{A}_1^i} \sup_I \frac{1}{|(T_1^i)'|} {\rm V}_I (H_i),&
\\
&{\rm III}_i& &=& &2  \sum_{a \in \partial \mathcal{A}_1^i} \frac{|H_i(a)|}{|(T_1^i)'(a)|}.&
\end{aligned}
\]

Using the Lipschitz condition on $K$, one gets ${\rm I}_i \le C \lip_i(K) \sum_{I \in \mathcal{A}_1^i} {\rm V}_I \left(\frac{1}{|(T_1^i)'|} \right)$, which gives ${\rm I}_i \le C \lip_i(K)$ by Corollary \ref{cor:var}. 

Let us now estimate ${\rm II}_i$. Let $y_0 < \ldots < y_l$ be a sequence of points in $I$. In order to estimate $\sum_{j=0}^{l-1} \left| H_i(y_{j+1}) - H_i(y_j) \right|$, we split $H_i$ in two terms in an obvious way, and we deal with the first one, the second being completely similar. We have $$\begin{aligned} \sum_{j=0}^{l-1} \sum_{k=0}^i \left| K(y_{j+1}, \ldots, T_1^k y_{j+1}, T_1^{k+1} y_j, \ldots, T_1^i y_j, \ldots) - K(y_{j+1}, \ldots, T_1^{k-1} y_{j+1}, T_1^k y_j, \ldots, T_1^i y_j, \ldots) \right|\\  \le \sum_{j=0}^{l-1} \sum_{k=0}^i \lip_k(K) |T_1^k y_{j+1} - T_1^ky_j| \le \sum_{k=0}^i \lip_k(K) m(T_1^k(I)). \end{aligned} $$

Since $I$ belongs to $\mathcal{A}_1^i$, the interval $T_1^k(I)$ is included in an interval of monotonicity of $T_{k+1}^i$, and hence its length is less than $\lambda^{-(i-k)}$. Therefore, one has ${\rm V}_I(H_i) \le 2\sum_{k=0}^i \lambda^{-(i-k)} \lip_k(K)$. An application of Lemma \ref{lem:CMS} then yields ${\rm II}_i \le C \sum_{k=0}^i \lambda^{-(i-k)} \lip_k(K)$.

Using again Lemma \ref{lem:CMS}, we can bound the third term by ${\rm III}_i \le C \lip_i(K)$. 

Putting together all the estimates, we find that 
\begin{equation}\label{eqestvfi}
{\rm V}(f_i) \le C \sum_{k=0}^i \lambda^{-(i-k)} \lip_k(K).
\end{equation}
Finally, \eqref{eqfisup} and \eqref{eqestvfi} give the same estimate for $\|f_i\|_{\rm BV}$.

Coming back to $K_p(\underline{x})$, we then have by \eqref{eqsumpmun} and \eqref{eqsumfimint}:

\[
\left| K_p(\underline{x}) - \int  K(y, T_1 y, \ldots, T_1^{p-1} y, x_p, \ldots) \, dm(y) \right| \le C \sum_{i=0}^{p-1} \theta^{p-i-1} \sum_{k=0}^i \lambda^{-(i-k)} \lip_k(K),
\]

which is less than $C \sum_{k=0}^{p-1} \rho^{p-k} \lip_k(K)$, as shown by a simple computation. This concludes the proof of our main theorem.

\section{Applications} \label{section:app}

Let $(T_n)$ be a sequential dynamical systems on $[0,1]$ that satisfies all the conditions of Theorem \ref{th:main}. We describe some applications of the concentration inequality, following ideas from \cite{CCSII, CMS}.

\subsection{Large deviations for ergodic sums}

Considering observables $K$ of the form $$K(x_0, \ldots, x_{n-1}) = \sum_{k=0}^{n-1} f(x_k)$$ with $f$ Lipschitz, an immediate application of Theorem \ref{th:main} gives us:

\begin{Pro} \label{pro:ld_lip} \em There exists $C > 0$ such that for all $f : [0,1] \to \mathbb{R}$ Lipschitz, all $n \ge 1$ and all $t > 0$:

\[ m\left(\frac{1}{n} \sum_{k=0}^{n-1} \left[f \circ T_1^k - \int f \circ T_1^k \, dm \right] > t\right) \le e^{- \frac{C n t^2}{ \lip(f)^2}}.\]
\end{Pro}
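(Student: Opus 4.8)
The plan is to apply Theorem \ref{th:main} directly to the observable $K(x_0,\ldots,x_{n-1}) = \sum_{k=0}^{n-1} f(x_k)$ and then translate the resulting exponential concentration inequality into a large deviation bound via the Chernoff-type argument recorded right after the definition of exponential concentration in Section \ref{section:main}.

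First I would check that $K$ is separately Lipschitz with $\lip_i(K) = \lip(f)$ for each $i = 0, \ldots, n-1$: changing a single coordinate $x_i$ changes $K$ by exactly $f(x_i) - f(x_i')$, which is bounded by $\lip(f)\,|x_i - x_i'|$. Hence $\sum_{j=0}^{n-1}\lip_j^2(K) = n\,\lip(f)^2$. Evaluating $K$ along the process $Z_k = T_1^k$ gives $K(Z_0,\ldots,Z_{n-1}) = \sum_{k=0}^{n-1} f\circ T_1^k$, and $\E_{\tilde m}(K) = \E_m(K) = \sum_{k=0}^{n-1}\int f\circ T_1^k\,dm$. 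Theorem \ref{th:main} then yields a constant $C>0$, depending only on $(T_n)$, such that
\[
\E_m\!\left(e^{\sum_{k=0}^{n-1}\left[f\circ T_1^k - \int f\circ T_1^k\,dm\right]}\right) \le e^{C n \lip(f)^2}.
\]
Applying this to $\lambda f$ in place of $f$ (which scales $\lip(\lambda f) = |\lambda|\lip(f)$) and using the Markov-type inequality $m(Y > s) \le e^{-\lambda s}\E_m(e^{\lambda Y})$ with $Y = \sum_{k=0}^{n-1}[f\circ T_1^k - \int f\circ T_1^k\,dm]$ and $s = nt$, I obtain for all $\lambda > 0$
\[
m\!\left(\frac1n\sum_{k=0}^{n-1}\left[f\circ T_1^k - \int f\circ T_1^k\,dm\right] > t\right) \le e^{-\lambda n t + C n \lambda^2 \lip(f)^2}.
\]
Optimizing over $\lambda > 0$ gives $\lambda = t/(2C\lip(f)^2)$ and the exponent $-nt^2/(4C\lip(f)^2)$, which is the claimed bound with the constant $C$ in the statement equal to $1/(4C)$ (a harmless abuse of notation, or one can simply rename).

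There is essentially no obstacle here: the only things to verify are the elementary Lipschitz bound on $K$ and the bookkeeping in the Chernoff optimization, both of which are routine. The single point that deserves a word is that the constant $C$ in Theorem \ref{th:main} is genuinely independent of $n$ and of the particular separately Lipschitz function, so that rescaling $f$ by $\lambda$ and letting $n$ vary does not degrade the estimate; this is exactly the content of the theorem as stated, so it may be invoked freely.
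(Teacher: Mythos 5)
Your proposal is correct and follows exactly the route the paper intends: the paper treats this as an immediate application of Theorem \ref{th:main} to $K(x_0,\ldots,x_{n-1})=\sum_{k=0}^{n-1}f(x_k)$, with $\lip_j(K)=\lip(f)$, combined with the Chernoff-type large deviation bound recorded right after the definition of exponential concentration. Your explicit verification of the Lipschitz constants and the optimization over $\lambda$ merely spells out what the paper leaves implicit, so there is nothing to add.
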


In Section~\ref{secaltappr}, we prove a similar statement for observables $f$ in BV.

\subsection{Empirical measure}

For $x \in [0,1]$, define for $n \ge 1$ the empirical measure $$\mathcal{E}_n(x) = \frac{1}{n}\sum_{k=0}^{n-1} \delta_{T_1^k x}.$$ By the strong law of large numbers, we know that for a.e. $x$, this measure approximates in the weak topology, as $n \to \infty$, the measure $m_n$ defined by $$m_n = \frac{1}{n} \sum_{k=0}^{n-1} (T_1^k)_{\star}m.$$
It is natural to try to quantify this phenomenon. For this, we need first to introduce a notion of distance on the set of probability measures. We will consider the Kantorovich distance, which, for probability measures $\mu_1$ and $\mu_2$ on $[0,1]$, can be defined as $$\kappa(\mu_1, \mu_2) = \int_0^1 |F_{\mu_1}(t) - F_{\mu_2}(t)| \, dt,$$ where $F_{\mu_i}(t) = \mu_i([0,t])$ is the distribution function of $\mu_i$. Thus, we have:

\begin{Pro}\label{prop:emp} \em There exist $t_0 > 0$ and $C > 0$ such that for all $t > t_0$ and $n \ge 1$:

\[m\left( \kappa(\mathcal{E}_n, m_n) > \frac{t}{\sqrt{n}} \right) \le e^{-Ct^2}. \]
\end{Pro}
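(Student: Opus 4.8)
The plan is to realize $\kappa(\mathcal{E}_n(x), m_n)$ as a separately Lipschitz function of $(x, T_1 x, \ldots, T_1^{n-1}x)$ and apply Theorem \ref{th:main}, then control the expectation $\mathbb{E}_m(\kappa(\mathcal{E}_n, m_n))$ separately. Concretely, I would fix $n$ and define, for $(y_0, \ldots, y_{n-1}) \in [0,1]^n$,
\[
K(y_0, \ldots, y_{n-1}) = \kappa\!\left( \frac{1}{n} \sum_{k=0}^{n-1} \delta_{y_k}, \, m_n \right) = \int_0^1 \left| \frac{1}{n} \sum_{k=0}^{n-1} \mathds{1}_{[0,t]}(y_k) - F_{m_n}(t) \right| dt,
\]
so that $K(x, T_1 x, \ldots, T_1^{n-1}x) = \kappa(\mathcal{E}_n(x), m_n)$. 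The first step is to check that $K$ is separately Lipschitz with $\lip_j(K) \le \frac{1}{n}$ for every $j$: changing one coordinate $y_j$ to $y_j'$ changes the empirical distribution function $t \mapsto \frac{1}{n}\sum_k \mathds{1}_{[0,t]}(y_k)$ only on the interval between $y_j$ and $y_j'$, and only by $\frac{1}{n}$ there, so by the triangle inequality inside the integral $|K(\ldots, y_j, \ldots) - K(\ldots, y_j', \ldots)| \le \frac{1}{n}|y_j - y_j'|$. Hence $\sum_{j=0}^{n-1}\lip_j^2(K) \le \frac{1}{n}$, and Theorem \ref{th:main} gives, for all $u > 0$,
\[
m\!\left( \kappa(\mathcal{E}_n, m_n) - \mathbb{E}_m(\kappa(\mathcal{E}_n, m_n)) > u \right) \le e^{-\frac{n u^2}{4C}}.
\]

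The second step is to bound $\mathbb{E}_m(\kappa(\mathcal{E}_n, m_n))$. Since $\kappa(\mathcal{E}_n, m_n) = \int_0^1 |G_n(t)| \, dt$ with $G_n(t) = \frac{1}{n}\sum_{k=0}^{n-1}(\mathds{1}_{[0,t]}\circ T_1^k - m([0,t] \cap (T_1^k)^{-1}\cdot))$ — more precisely $G_n(t,x) = \frac{1}{n}\sum_k [\mathds{1}_{[0,t]}(T_1^k x) - (T_1^k)_\star m([0,t])]$ — Jensen and Fubini give $\mathbb{E}_m(\kappa(\mathcal{E}_n,m_n)) \le \int_0^1 \sqrt{\mathbb{E}_m(G_n(t)^2)} \, dt$. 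So I must show $\mathbb{E}_m(G_n(t)^2) \le C/n$ uniformly in $t$. Expanding the square, this reduces to a covariance sum $\frac{1}{n^2}\sum_{k,l} \mathrm{Cov}_m(\mathds{1}_{[0,t]}\circ T_1^k, \mathds{1}_{[0,t]}\circ T_1^l)$; for $k \le l$ this is $\int (\mathds{1}_{[0,t]}\circ T_1^k)(\mathds{1}_{[0,t]}\circ T_1^l)\,dm - (\cdots)(\cdots)$, which after pushing forward by $T_1^k$ and using the transfer operator becomes an integral against $P_{k+1}^l$ applied to a centered BV function times a bounded function, hence bounded by $C\theta^{l-k}$ by {\em (Dec)} together with {\bf (Min)} (exactly the type of estimate used repeatedly in Section \ref{section:main}); the indicator $\mathds{1}_{[0,t]}$ has BV norm bounded by a universal constant independent of $t$. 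Summing the geometric series over $l-k$ yields $\mathbb{E}_m(G_n(t)^2) \le C/n$, hence $\mathbb{E}_m(\kappa(\mathcal{E}_n, m_n)) \le C/\sqrt{n}$.

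The final step is to combine the two: write $\frac{t}{\sqrt n} = \frac{t}{2\sqrt n} + \frac{t}{2\sqrt n}$, choose $t_0$ so that $\frac{t_0}{2\sqrt n} \ge \mathbb{E}_m(\kappa(\mathcal{E}_n, m_n))$ for all $n$ (possible since $\mathbb{E}_m(\kappa(\mathcal{E}_n,m_n)) \le C_1/\sqrt n$, take $t_0 = 2C_1$), so that for $t > t_0$
\[
m\!\left( \kappa(\mathcal{E}_n, m_n) > \frac{t}{\sqrt n} \right) \le m\!\left( \kappa(\mathcal{E}_n, m_n) - \mathbb{E}_m(\kappa(\mathcal{E}_n, m_n)) > \frac{t}{2\sqrt n} \right) \le e^{-\frac{n}{4C}\cdot\frac{t^2}{4n}} = e^{-\frac{t^2}{16C}},
\]
which is the claimed bound after renaming the constant. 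The main obstacle is the second step: one must be careful that the BV norm of the indicator $\mathds{1}_{[0,t]}$ — or rather the relevant centered quantity $\mathds{1}_{[0,t]}\circ T_1^k - \text{const}$ pulled back appropriately — is controlled uniformly in $t$, and that the decorrelation estimate {\em (Dec)} is applied to a genuinely mean-zero BV function so that the geometric decay kicks in; this is routine given the machinery of Section \ref{section:main} but requires writing the covariance in the correct form before invoking {\em (Dec)} and {\bf (Min)}.
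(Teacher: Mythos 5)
Your proposal is correct and follows essentially the same route as the paper: apply the main concentration theorem to the observable $K_n(x_0,\ldots,x_{n-1})=\int_0^1 \bigl| \frac{1}{n}\sum_k \mathds{1}_{[0,t]}(x_k)-F_{m_n}(t)\bigr|\,dt$ with $\lip_j(K_n)\le \frac{1}{n}$, then bound $\mathbb{E}_m(\kappa(\mathcal{E}_n,m_n))$ by $C/\sqrt{n}$ via Cauchy--Schwarz and the decorrelation estimate \emph{(Dec)} applied to the centered BV function $\bigl(\chi_t-\int \chi_t\circ T_1^k\,dm\bigr)P_1^k\mathds{1}$. The only cosmetic difference is that you invoke \textbf{(Min)} in the covariance bound, where \emph{(Dec)} alone suffices (it is harmless since \textbf{(Min)} is assumed anyway), and you spell out the splitting of $t/\sqrt{n}$ and the choice of $t_0$ which the paper leaves implicit.
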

The following proof is an application of our main theorem, in Section~\ref{secaltappr} we prove this proposition using a direct approach.
\begin{proof} For $n \ge 1$, define $$K_n(x_0, \ldots, x_{n-1}) = \int_0^1 \left| \frac{1}{n} \sum_{k=0}^{n-1} \mathds{1}_{[0,t]}(x_k) - F_{m_n}(t) \right| \, dt.$$
We clearly have $\kappa(\mathcal{E}_n(x), m_n) = K_n(x, \ldots, T_1^{n-1} x)$ and $\lip_j(K_n) \le \frac{1}{n}$ for any $0 \le j \le n-1$. By the concentration inequality, and its large deviation counterpart, we derive:

$$m \left(\kappa(\mathcal{E}_n, m_n) - \mathbb{E}_m(\kappa(\mathcal{E}_n, m_n)) > \frac{t}{\sqrt{n}} \right) \le e^{-Ct^2}.$$
To conclude, it is then sufficient to prove that $\mathbb{E}_m(\kappa(\mathcal{E}_n, m_n))$ is of order $\frac{1}{\sqrt{n}}$.

Denote by $\chi_t$ the characteristic function $\mathds{1}_{[0,t]}$. Using Schwarz inequality, we have 

$$ \begin{aligned} 
\mathbb{E}_m(\kappa(\mathcal{E}_n, m_n)) = \int \left( \int_0^1 \left| \frac{1}{n} \sum_{k=0}^{n-1} \chi_t(T_1^k x) - F_{m_n}(t) \right| \, dt \right) \, dm(x)
\\ \le \left[ \int_0^1 \left( \int \left| \frac{1}{n} \sum_{k=0}^{n-1} \left(  \chi_t \circ T_1^k - F_{(T_1^k)_{\star}m}(t) \right) \right|^2 \, dm \right) \, dt \right]^{\frac{1}{2}}.
\end{aligned}$$

Expanding the square, we have \begin{eqnarray*} \lefteqn{\int \left| \frac{1}{n} \sum_{k=0}^{n-1} \left( \chi_t \circ T_1^k - F_{(T_1^k)_{\star}m}(t) \right) \right|^2  dm} \\  &=& \frac{1}{n^2} \sum_{k,l=0}^{n-1} \int \left( \chi_t \circ T_1^k - F_{(T_1^k)_{\star}m} (t) \right) \left( \chi_t \circ T_1^l - F_{(T_1^l)_{\star}m} (t) \right)  dm. \end{eqnarray*}

If $k \le l$, by the properties of transfer operators, and since $F_{(T_1^k)_{\star}m}(t) = \int \chi_t \circ T_1^k \, dm$, \begin{eqnarray*} \lefteqn{\int \left( \chi_t \circ T_1^k - F_{(T_1^k)_{\star}m} (t) \right) \left( \chi_t \circ T_1^l - F_{(T_1^l)_{\star}m} (t) \right)  dm} \\ &=& \int \left( \chi_t - \int \chi_t \circ T_1^l \right) P_{k+1}^l \left( \left( \chi_t - \int \chi_t \circ T_1^k \right) P_1^k \mathds{1} \right) dm . \end{eqnarray*}

Since $\left( \chi_t - \int \chi_t \circ T_1^k \right) P_1^k \mathds{1}$ has $0$ integral, $\{\chi_t\}_{t}$ is a bounded family in BV, and $(P_1^k \mathds{1})_k$ is also bounded in BV, we can use property {\em (Dec)} to get 

\begin{equation}\label{eqdecemp}
 \left| \int \left( \chi_t - \int \chi_t \circ T_1^l \right) P_{k+1}^l \left( \left( \chi_t - \int \chi_t \circ T_1^k \right) P_1^k \mathds{1} \right) dm\right| \le C \theta^{l-k}. 
 \end{equation}

It follows that $$\mathbb{E}_m(\kappa(\mathcal{E}_n, m_n)) \le \frac{C}{n} \left( \sum_{k,l=0}^{n-1} \theta^{- |k-l|} \right)^{\frac{1}{2}} = \mathcal{O}\left( \frac{1}{\sqrt{n}}\right).$$\end{proof}

\subsection{Efficiency of shadowing}

Let $A$ be a measurable subset of $[0,1]$ with positive measure. How well can we approximate the trajectory of a point $x \in [0,1]$ by a trajectory starting in $A$ ? The following result provides an estimation of the average quality of this shadowing.

\begin{Pro} \label{pro:shadow} \em There exist constants $C_1, C_2 > 0$ such that for all $A$ with $m(A) > 0$ and all $n \ge 1$  the sequence $(Z_n)$ of functions defined by 
$$Z_n(x) = \inf_{y \in A} \frac{1}{n}\sum_{k=0}^{n-1} |T_1^k x - T_1^k y|,$$
satisfies for any $t > 0$:
$$m\left( Z_n > C_1 \frac{\sqrt{\left| \log m(A) \right|}}{\sqrt{n}} + \frac{t}{\sqrt{n}}\right) \le e^{-C_2 t^2}.$$
\end{Pro}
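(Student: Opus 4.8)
The plan is to follow the standard recipe from \cite{CMS}, combining the concentration inequality of Theorem~\ref{th:main} with an elementary estimate on the expected value of $Z_n$. First I would check the separately Lipschitz property of the relevant observable. Consider $K_n(x_0,\ldots,x_{n-1}) = \inf_{y \in A} \frac{1}{n}\sum_{k=0}^{n-1}|x_k - T_1^k y|$, so that $Z_n(x) = K_n(x, T_1 x, \ldots, T_1^{n-1}x)$. Since an infimum of functions each $\frac{1}{n}$-Lipschitz in the coordinate $x_j$ is itself $\frac{1}{n}$-Lipschitz in $x_j$, we get $\lip_j(K_n) \le \frac{1}{n}$. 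Theorem~\ref{th:main} then yields, via the large deviation bound with $\sum_j \lip_j^2(K_n) \le \frac{1}{n}$, a constant $C > 0$ with
\[
m\left( Z_n - \mathbb{E}_m(Z_n) > \frac{t}{\sqrt{n}} \right) \le e^{-C t^2}
\]
for all $t > 0$ and $n \ge 1$.

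It then remains to bound $\mathbb{E}_m(Z_n)$ from above by a multiple of $\frac{\sqrt{|\log m(A)|}}{\sqrt{n}}$. The key observation is that, for any point $x$, if $y \in A$ happens to lie in the same element of the partition $\mathcal{A}_1^n$ as $x$, then $|T_1^k x - T_1^k y| \le m(T_1^k(I)) \le \lambda^{-(n-k)}$ for the corresponding $I \in \mathcal{A}_1^n$, since $T_1^k(I)$ is contained in an interval of monotonicity of $T_{k+1}^n$; summing the geometric series gives $\frac{1}{n}\sum_{k=0}^{n-1}|T_1^k x - T_1^k y| \le \frac{C}{n}$ on that event. The obstruction is that $A$ need not meet every element of $\mathcal{A}_1^n$; it only meets those elements whose $m$-measure is not too small compared with $m(A)$. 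I would therefore fix a scale: call an element $I \in \mathcal{A}_1^n$ \emph{good} if $m(I \cap A) > 0$, and control separately the bad set. On the good elements one uses the above bound; the measure of the union of bad elements, i.e. the set of $x$ for which no shadowing point from $A$ lies in $x$'s partition element, is what must be estimated.

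To make this quantitative I would choose an intermediate depth $N = N(n, m(A)) \le n$ and split $Z_n$ via $Z_n(x) \le \frac{1}{N}\sum_{k=0}^{N-1}|T_1^k x - T_1^k y| + \frac{1}{n}\cdot(\text{bounded})$, localizing the shadowing argument to the first $N$ steps: if $x$ lies in an element of $\mathcal{A}_1^N$ that meets $A$, then $\frac{1}{n}\sum_{k=0}^{n-1}|T_1^k x - T_1^k y| \le \frac{C}{N} + \frac{n-N}{n} \le \frac{C}{N} + 1$, which is useless unless we argue more carefully — so instead I would note that after the first $N$ steps one can \emph{re-shadow}: having matched the $\mathcal{A}_1^N$-element, the contribution of steps $k \ge N$ is at most $1$ trivially but one only pays it on the bad set. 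The cleaner route, following \cite{CMS} Section on shadowing: the set $B_N$ of points whose $\mathcal{A}_1^N$-element does not meet $A$ has measure $m(B_N) \le$ (number of $\mathcal{A}_1^N$-elements of measure $\le \eta$) times $\eta$, but more usefully $m(B_N) \le 1 - \sum_{I \cap A \ne \emptyset} m(I)$, and by distortion (Lemma~\ref{lem:dist}) $m(I) \ge c\, m(I\cap A)/m(A) \cdot m(I)$... — the usable inequality is that $\mathbb{E}_m(\text{dist to shadow on first } N \text{ steps}) \lesssim \frac{1}{N} + m(B_N)$ and $m(B_N) \le C\rho^N / m(A)$ for a suitable $\rho < 1$ coming from the uniform expansion and bounded number of branches (so that the number of $\mathcal{A}_1^N$ elements grows at most geometrically, each of measure at most $\lambda^{-N}$, hence at most $C\lambda^{-N}m(A)^{-1}$ of them fail to meet $A$ in measure when $A$ is spread out — more precisely one uses that every $\mathcal{A}_1^N$ element has length $\ge c\lambda_{\max}^{-N}$ by distortion, so the number meeting $A$ is controlled). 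Optimizing the choice of $N$: taking $N \asymp |\log m(A)|$ balances $\frac{1}{N}$ against $m(B_N) \asymp e^{-cN}/m(A) \asymp 1$ — this doesn't quite give $\frac{1}{\sqrt{n}}$, so in fact one takes $N$ as large as the constraint $C\rho^N/m(A) \le \frac{1}{\sqrt n}$ permits, i.e. $N \asymp |\log m(A)| + \log\sqrt n$, and checks $\frac{1}{N} + \frac{1}{\sqrt n} \lesssim \frac{\sqrt{|\log m(A)|}}{\sqrt n}$ in the regime where the statement is nontrivial; combining $\mathbb{E}_m(Z_n) \le C_1\frac{\sqrt{|\log m(A)|}}{\sqrt n}$ with the concentration bound above and absorbing constants finishes the proof.

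\textbf{Main obstacle.} The genuinely delicate part is the estimate of $\mathbb{E}_m(Z_n)$: one must quantify how the positive-measure set $A$ fails to intersect deep dynamical partition elements, and trade off the depth $N$ (which makes the per-step shadowing error $\sim 1/N$ small) against the probability $\sim \rho^N/m(A)$ of landing in an element missed by $A$. Getting the precise power $\sqrt{|\log m(A)|}$ rather than $|\log m(A)|$ requires choosing $N$ logarithmically in $m(A)$ and exploiting the Cauchy–Schwarz-type gain already built into the concentration estimate; the dynamical input (uniform expansion, bounded distortion via Lemma~\ref{lem:dist}, bounded number of branches) enters only to guarantee that partition elements shrink geometrically and are not too numerous, which is exactly what Lemma~\ref{lem:CMS} and Corollary~\ref{cor:var} are designed to provide.
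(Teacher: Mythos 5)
Your first step coincides with the paper's: apply Theorem~\ref{th:main} to $K_n(x_0,\ldots,x_{n-1}) = \frac{1}{n}\inf_{y\in A}\sum_{k=0}^{n-1}|T_1^k y - x_k|$, which has $\lip_j(K_n)\le \frac{1}{n}$, to obtain $m\left( Z_n - \mathbb{E}_m(Z_n) > \frac{t}{\sqrt n}\right) \le e^{-Ct^2}$. The genuine gap is in your bound on $\mathbb{E}_m(Z_n)$, which is the heart of the proposition. The partition/counting argument you sketch cannot work: the set $B_N$ of points whose element of $\mathcal{A}_1^N$ does not meet $A$ is \emph{not} small --- if $A$ is an interval of measure $a$, then $m(B_N)$ is of order $1-a$ for every $N$, so the claimed estimate $m(B_N)\le C\rho^N/m(A)$ is false, and on $B_N$ you only have the trivial bound $Z_n\le 1$. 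Moreover, even on the good set your scheme produces a term of order $1/N$ with $N$ logarithmic in $n$ and $m(A)$, hence of order $1/\log n$, which is far larger than the required $\sqrt{|\log m(A)|}/\sqrt n$ (take $m(A)=1/2$: the statement forces $\mathbb{E}_m(Z_n)=\mathcal{O}(1/\sqrt n)$). You half-notice this (``doesn't quite give $1/\sqrt n$''), but restricting to ``the regime where the statement is nontrivial'' does not repair it, since the proposition is asserted for every $A$ of positive measure, every $n$ and every $t>0$.

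The missing idea --- and the paper's actual argument --- is that no dynamical estimate is needed for $\mathbb{E}_m(Z_n)$ at all: one uses the concentration inequality a second time, in the downward direction, exploiting that $Z_n\equiv 0$ on $A$. Since $A\subset\left\{\mathbb{E}_m(Z_n)-Z_n > \tfrac{1}{2}\mathbb{E}_m(Z_n)\right\}$, the large deviation bound applied with $t=\sqrt n\,\mathbb{E}_m(Z_n)/2$ gives $m(A)\le e^{-Cn(\mathbb{E}_m(Z_n))^2/4}$, and taking logarithms yields exactly $\mathbb{E}_m(Z_n)\le C_1\sqrt{|\log m(A)|}/\sqrt n$. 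This self-referential use of concentration (the observable vanishes on a set of positive measure, hence its mean must be small) is what produces both the $1/\sqrt n$ scaling and the $\sqrt{|\log m(A)|}$ factor; it is precisely the step your expectation estimate was trying, unsuccessfully, to replace, while your Lemma~\ref{lem:dist}/Lemma~\ref{lem:CMS} machinery plays no role in this proposition.
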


\begin{proof} We apply the concentration inequality to the functions $$K_n(x_0, \ldots, x_{n-1}) = \frac{1}{n} \inf_{y \in A} \sum_{k=0}^{n-1} |T_1^k y - x_k|.$$ They satisfy $\lip_k(K_n) \le \frac{1}{n}$, so we get $$m \left( Z_n - \mathbb{E}_m(Z_n) > \frac{t}{\sqrt{n}} \right) \le e^{-Ct^2}.$$

We estimate now $\mathbb{E}_m(Z_n)$. One has $$m(A) \le m \left( \mathbb{E}_m(Z_n) - Z_n  > \frac{1}{\sqrt n} \frac{\sqrt n | \mathbb{E}_m(Z_n)|}{2} \right),$$
since $Z_n = 0$ on $A$. By the concentration inequality, we deduce that $m(A) \le e^{-C n ( \mathbb{E}_m ( Z_n))^2/4}$, whence $$\mathbb{E}_m({Z}_n) \le C_1 \frac{\sqrt{\left|\log m(A)\right|}}{\sqrt n}.$$
\end{proof}

\subsection{Almost sure central limit theorem} Let $(Z_n)$ be a random process that satisfies the central limit theorem, i.e. there exist $A_n \in \mathbb{R}$ and $B_n > 0$ such that $S_n = \sum_{k=0}^{n-1} Z_k$ verifies $$\frac{S_n - A_n}{B_n} \to W$$ in the weak topology, where $W$ is distributed as a standard normal law $\mathcal{N}(0,1)$.

By definition, this means that for every $t \in \mathbb{R}$ $$\mathbb{E}\left( \mathds{1}_{ \left\{ \frac{ S_n - A_n}{B_n} \le t \right\} } \right) \to \mathbb{P}(W \le t) = \frac{1}{\sqrt{2 \pi}} \int_{- \infty}^t e ^{- \frac{s^2}{2}} \, ds.$$

We can then ask about the pointwise behavior of $ \mathds{1}_{ \left\{ \frac{ S_n - A_n}{B_n} \le t \right\}}$. If this quantity cannot in general converge almost surely, it appears that after a correct averaging, one can recover the convergence. It turns out that the correct averaging is a logarithmic one, more precisely, we say that $(Z_n)$ satisfies an almost sure central limit theorem if almost surely $$\frac{1}{H_n} \sum_{k=1}^n  \frac{1}{k} \, \delta_{\frac{S_k - A_k}{B_k}} \to W$$ in the weak topology, where $H_n = \sum_{k=1}^n \frac{1}{k}$ (one can observe that $H_n\sim \log n$).

We defer the interested reader to \cite{Ber98, Jon} for particularly nice reviews of this subject, and to \cite{CG07} for a description of applications in dynamical systems.

Let now $(T_n)$ be a sequential dynamical systems on $[0,1]$ which satisfies the same assumptions as before. Let $f : [0,1] \to \mathbb{R}$ be a Lipschitz observable, and define $$S_n = \sum_{k=0}^{n-1} \left[ f \circ T_1^k - \int f \circ T_1^k \, dm \right].$$ If $\|S_n\|_2$ is unbounded, we have by Conze and Raugi that $\frac{S_n}{\|S_n\|_2}$ goes in distribution to $W \sim \mathcal{N}(0,1)$.

In order to prove an almost-sure version of this convergence, we will need an additional assumption on the growth of the variance of the ergodic sum, namely that there exists a $C > 0$ such that $\|S_n\|_2 \ge C n^{\frac{1}{2}}$ for all $n$.

\begin{Pro} \em Let $f : [0,1] \to \mathbb{R}$ be a Lipschitz observable, and define $S_n$ as above. Assume that $\|S_n\|_2 \ge C n^{\frac{1}{2}}$ for some $C >0$. Then $S_n$ satisfies the almost sure central limit theorem:  
$$\frac{1}{H_n} \sum_{k=1}^n \frac{1}{k} \, \delta_{\frac{S_k(x)}{\|S_k\|_2}} \overset{\mathcal{L}}{\rightarrow} W$$
for almost every $x \in [0,1]$.
\end{Pro}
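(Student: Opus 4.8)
The plan is to follow the now-standard route for deriving an almost sure central limit theorem (ASCLT) from a concentration inequality, as developed for stationary dynamical systems (e.g. in the work of Chazottes--Gou\"ezel), adapting it to our non-stationary setting. The general principle is this: an ASCLT for a sequence of random variables $(Y_k)$ with $Y_k \overset{\mathcal{L}}{\to} W$ follows once one knows (a) the one-dimensional distributions converge, which here is exactly the Conze--Raugi CLT giving $\frac{S_k}{\|S_k\|_2}\overset{\mathcal{L}}{\to}W$, and (b) a quantitative decorrelation estimate for the empirical logarithmic averages, of the form that for a fixed Lipschitz (or bounded variation) test function $\varphi:\mathbb{R}\to\mathbb{R}$, the sequence $g_k(x) = \varphi\!\left(\frac{S_k(x)}{\|S_k\|_2}\right) - \mathbb{E}_m\!\left(\varphi\!\left(\frac{S_k}{\|S_k\|_2}\right)\right)$ satisfies $\left|\mathbb{E}_m(g_k g_l)\right| \le C\left(\frac{k}{l}\right)^{\gamma}$ for some $\gamma>0$ and $k\le l$. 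Granting (b), the variance of $\frac{1}{H_n}\sum_{k=1}^n \frac1k g_k$ is $O(1/\log n)$ (actually $O((\log n)^{-\gamma'})$ suffices), and a standard subsequence argument along $n_j$ with $\log n_j \sim j^2$ combined with Borel--Cantelli and monotonicity between consecutive $n_j$ upgrades $L^2$ convergence to almost sure convergence; testing against a countable convergence-determining family of $\varphi$'s then yields weak convergence of the random logarithmic empirical measures for a.e. $x$.

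The core of the argument, and the place where the concentration inequality and the variance lower bound $\|S_k\|_2 \ge C k^{1/2}$ enter, is establishing the decorrelation estimate (b). First I would reduce to $\varphi$ Lipschitz; then I would write, for $k\le l$,
\[
\mathbb{E}_m(g_k g_l) = \mathbb{E}_m\Big[\big(g_k(x)\big)\Big(\varphi\big(\tfrac{S_l}{\|S_l\|_2}\big) - \mathbb{E}_m \varphi\big(\tfrac{S_l}{\|S_l\|_2}\big)\Big)\Big].
\]
The observable $S_l = S_k + (S_l - S_k)$, and $S_l - S_k = \sum_{j=k}^{l-1}[f\circ T_1^j - \int f\circ T_1^j\,dm]$ depends, modulo the transfer operator bookkeeping, "mostly" on coordinates from time $k$ onwards, while $S_k$ depends on the first $k$ coordinates; after dividing by $\|S_l\|_2 \ge C l^{1/2}$, the contribution of $S_k$ is of size $O(\sqrt{k/l})$ in sup norm since $\|S_k\|_\infty \le k\,\lip(f)$ — wait, that bound is too crude; instead one controls $\|S_k\|_2 = O(\sqrt k)$ (from the upper Lasota--Yorke/decorrelation bounds on the variance, which follow from $(D_r)$ and (Dec) exactly as in Conze--Raugi) and uses an $L^2$ Markov-type splitting: replace $\varphi(S_l/\|S_l\|_2)$ by $\varphi((S_l-S_k)/\|S_l\|_2)$ at a cost $\lip(\varphi)\,\|S_k\|_2/\|S_l\|_2 = O(\sqrt{k/l})$ in $L^1$, and then estimate the correlation of $g_k$ with a function of $S_l - S_k$ using (Dec) together with the fact that $g_k$ is (approximable by) a BV observable of the first $k$ iterates and $S_l - S_k$ is built from the dynamics after time $k$. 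This gives a decay term plus the algebraic $O(\sqrt{k/l})$ term, and since $\theta^{l-k} \le (k/l)^{\gamma}$ fails for $l$ close to $k$ one simply uses the trivial bound $|\mathbb{E}(g_kg_l)|\le \|\varphi\|_\infty^2$ when $l \le 2k$ and the above when $l > 2k$; in all cases one obtains $|\mathbb{E}_m(g_kg_l)| \le C(k/l)^{\gamma}$ with, say, $\gamma = 1/2$.

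From here the rest is routine. Let $a_k = \frac1k$, $H_n = \sum_{k\le n} a_k \sim \log n$, and compute
\[
\mathbb{E}_m\left[\Big(\tfrac{1}{H_n}\sum_{k=1}^n a_k g_k\Big)^2\right] = \frac{1}{H_n^2}\sum_{k,l=1}^n a_k a_l \,\mathbb{E}_m(g_k g_l) \le \frac{C}{H_n^2}\sum_{k\le l\le n} \frac{1}{kl}\Big(\frac kl\Big)^{\gamma},
\]
and the double sum is $O(\log n)$ (for each $l$, $\sum_{k\le l} k^{\gamma-1} l^{-1-\gamma} = O(l^{-1})$, then sum over $l$), so the whole expression is $O(1/\log n)$. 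Along $n_j = \lfloor e^{j^2}\rfloor$ this is summable, so Borel--Cantelli gives $\frac{1}{H_{n_j}}\sum_{k=1}^{n_j} a_k g_k \to 0$ a.s.; for $n_j \le n < n_{j+1}$ one bounds the difference using $\|g_k\|_\infty \le 2\|\varphi\|_\infty$ and $H_{n_{j+1}}/H_{n_j}\to 1$, so the full sequence converges to $0$ a.s. Combined with $\mathbb{E}_m\varphi(S_k/\|S_k\|_2)\to \mathbb{E}\varphi(W)$ (Cesàro-averaged, from the CLT), this yields $\frac{1}{H_n}\sum_{k=1}^n \frac1k \varphi(S_k(x)/\|S_k\|_2) \to \mathbb{E}\varphi(W)$ a.s.; running this simultaneously over a countable convergence-determining set of bounded Lipschitz $\varphi$ and intersecting the full-measure sets gives the claimed weak convergence of $\frac{1}{H_n}\sum_{k=1}^n \frac1k \delta_{S_k(x)/\|S_k\|_2}$ to $W$ for a.e. $x$. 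The main obstacle is the correlation bound (b): making precise the assertion that $g_k$ "depends mainly on the first $k$ coordinates" in a way compatible with the transfer-operator formalism and the non-stationarity, and correctly absorbing the replacement error $O(\sqrt{k/l})$ — this is where both the concentration inequality (to control $\mathbb{E}_m(\cdot)$ terms, if one prefers that route) or direct (Dec)-estimates, and crucially the hypothesis $\|S_k\|_2 \ge C\sqrt k$, are indispensable.
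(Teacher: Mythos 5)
Your route is genuinely different from the paper's: the paper does not go through a covariance bound for the sequence $g_k=\varphi(S_k/\|S_k\|_2)-\E_m\varphi(S_k/\|S_k\|_2)$ at all. Instead it applies Theorem \ref{th:main} directly to the whole logarithmic average, i.e.\ to $K_n(x_0,\dots,x_{n-1})=\frac{1}{H_n}\sum_{k=1}^n\frac1k g\bigl(\frac{\sum_{j<k}[f(x_j)-\int f\circ T_1^j]}{\|S_k\|_2}\bigr)$, which is separately Lipschitz with $\lip_j(K_n)=\mathcal{O}(j^{-1/2}/H_n)$ thanks to the hypothesis $\|S_k\|_2\ge C k^{1/2}$; hence $\sum_j\lip_j^2(K_n)=\mathcal{O}(1/\log n)$, the concentration inequality gives a deviation bound $e^{-Ct^2\log n}$, and one concludes by Borel--Cantelli along $n_j=e^{j^\alpha}$ plus a gap estimate (the paper controls the gaps via $|g(x)|\le C|x|$ and a second-moment bound on $G_j$; your gap control via boundedness of $\varphi$ and $H_{n_{j+1}}/H_{n_j}\to1$ is also fine). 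This direct use of the already-proven concentration inequality is what makes the paper's proof short.

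The problem with your proposal is that its central step (b) has a real gap as written. After you replace $\varphi(S_l/\|S_l\|_2)$ by $\varphi((S_l-S_k)/\|S_l\|_2)$, the remaining ``future'' observable depends on the coordinates $T_1^k x,\dots,T_1^{l-1}x$ while $g_k$ depends on $x,\dots,T_1^{k-1}x$: there is no time gap, so \emph{(Dec)} applied naively gives no decay (the bound $\bigl|\int P_1^k(g_k)\,v\,dm\bigr|\le\|v\|_\infty\|g_k\|_{L^1_m}$ is of order one). You must insert a buffer, e.g.\ also discard $S_{2k}-S_k$ (cost again $\mathcal{O}(\sqrt{k/l})$, using $\|S_{2k}\|_2=\mathcal{O}(\sqrt k)$) or a block of length $\sim\log l$ using the $\mathcal{O}(l^{-1/2})$ per-coordinate Lipschitz constants, before invoking \emph{(Dec)} with gap $k$ (or $\log l$). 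More seriously, even with the buffer you need $\|P_1^k(g_k)\|_{\rm BV}\le C$ uniformly in $k$ in order to feed \emph{(Dec)}; this is not automatic, since $x\mapsto\varphi(S_k(x)/\|S_k\|_2)$ is a function of the whole orbit segment whose variation is a priori exponentially large in $k$, and the assertion that it is ``(approximable by) a BV observable of the first $k$ iterates'' is exactly the point to be proved. Establishing it requires the distortion and inverse-branch estimates (Lemma \ref{lem:dist}, Lemma \ref{lem:CMS}, Corollary \ref{cor:var} and the bound on ${\rm V}(f_i)$) that form the technical core of the proof of Theorem \ref{th:main}, applied to the separately Lipschitz function $\varphi(S_k/\|S_k\|_2)$ with $\lip_j=\mathcal{O}(k^{-1/2})$. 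So your scheme can be completed, but only by redoing (or explicitly invoking) that machinery, at which point it is longer than the paper's argument rather than an alternative shortcut; the remaining probabilistic steps (variance $\mathcal{O}(1/\log n)$, subsequence, Borel--Cantelli, countable convergence-determining family) are standard and correct.
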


\begin{proof} We have to show that almost everywhere, $\frac{1}{H_n} \sum_{k=1}^n \frac{1}{k} \, g \left( \frac{S_k}{\|S_k\|_2} \right) \to \mathbb{E}(g(W))$ for all $g : \mathbb{R} \to \mathbb{R}$ continuous and bounded. By standard density and separability arguments (see e.g. Theorem 2.4 in \cite{Jon}), it is sufficient to prove that for all $g$ bounded and Lipschitz, this convergence occurs almost surely. Remark also that we can assume without loss of generality that $g(0) = 0$. Hence there exists $C>0$ such that $ |g(x)| \le C |x|$ for all $x$.

Define $$K_n(x_0, \ldots, x_{n-1}) = \frac{1}{H_n} \sum_{k=1}^n \frac{1}{k} \, g\left( \frac{\sum_{j=0}^{k-1} \left[f(x_j) - \int f \circ T_1^j \right]}{\|S_k\|_2} \right).$$
We have then $K_n(x, \ldots, T_1^{n-1}x) = \frac{1}{H_n} \sum_{k=1}^n \frac{1}{k} \, g \left( \frac{S_k}{\|S_k\|_2} \right)$, and the assumption on $\|S_n\|_2$ gives
$$\lip_j(K_n) \le \frac{1}{H_n} \sum_{k > j} \frac{1}{k} \lip(g) \frac{\lip(f)}{\|S_k\|_2} = \mathcal{O}\left(\frac{1}{H_n} \sum_{k > j} k^{- \frac{3}{2}} \right) = \mathcal{O}\left(\frac{1}{j^{ \frac{1}{2}} H_n}\right).$$

Hence $$\sum_{j=0}^{n-1} \lip_j^2(K_n) = \mathcal{O} \left( \frac{1}{H_n^2} \sum_{j=0}^{n-1} j^{-1}  \right) = \mathcal{O}\left( \frac{1}{\log n} \right).$$

Using the concentration inequality, we deduce that 

\[ m\left( \left|\frac{1}{H_n} \sum_{k=1}^n \frac{1}{k} \, g \left( \frac{S_k}{\|S_k\|_2} \right) - \mathbb{E}_m\left( \frac{1}{H_n} \sum_{k=1}^n \frac{1}{k} \, g \left( \frac{S_k}{\|S_k\|_2} \right)\right) \right| \ge t \right) \le e^{-C t^2 \log n}. \]

Hence, if we define the subsequence $n_j = e^{j^{\alpha}}$ with $\alpha > 0$, we have that $\frac{1}{H_{n_j}} \sum_{k=1}^{n_j} \frac{1}{k} \, g \left( \frac{S_k}{\|S_k\|_2} \right)$ goes to $\mathbb{E}(g(W))$ by the Borel-Cantelli lemma, since $\mathbb{E}_m \left(\frac{1}{H_{n_j}} \sum_{k=1}^{n_j} \frac{1}{k} \, g \left( \frac{S_k}{\|S_k\|_2} \right) \right)$ goes to $\mathbb{E}(g(W))$ when $j \to \infty$ by the usual CLT.

It remains to control the gaps. If $n_j < n \le n_{j+1}$, we have 

\begin{eqnarray*}
\lefteqn{\frac{1}{H_n} \sum_{k=1}^n \frac{1}{k} \, g \left( \frac{S_k}{\|S_k\|_2} \right) - \frac{1}{H_{n_j}} \sum_{k=1}^{n_j} \frac{1}{k} \, g \left( \frac{S_k}{\|S_k\|_2} \right)} \\ &=& \frac{1}{H_n} \sum_{k=n_j +1}^n \frac{1}{k} \, g \left( \frac{S_k}{\|S_k\|_2} \right) + \left( \frac{H_{n_j}}{H_n} - 1 \right) \frac{1}{H_{n_j}} \sum_{k=1}^{n_j} \frac{1}{k} \, g \left( \frac{S_k}{\|S_k\|_2} \right).
\end{eqnarray*}

Using the previous observations, the supremum over $n_j < n \le n_{j+1}$ of the second term tends to $0$ when $j \to \infty$. For the first term, the assumption on $g$ gives us
\[
\sup_{n_j < n \le n_{j+1}} \left| \frac{1}{H_n} \sum_{k=n_j +1}^n \frac{1}{k} \, g \left( \frac{S_k}{\|S_k\|_2} \right) \right| \le \frac{C}{H_{n_j}} \sum_{k=n_j +1}^{n_{j+1}} \frac{|S_k|}{k \|S_k\|_2}.
\]

We now prove the almost-sure convergence to zero of $G_j = \frac{1}{H_{n_j}} \sum_{k=n_j+1}^{n_{j+1}} \frac{|S_k|}{k \|S_k\|_2}$. Using the Schwarz inequality, we have 

\begin{eqnarray*} \mathbb{E}_m(G_j^2) \le \frac{1}{H_{n_j}^2} \sum_{k, l = n_j +1}^{n_{j+1}} \frac{\mathbb{E}_m(S_k^2)^{\frac 1 2} \mathbb{E}_m(S_l^2)^{\frac 1 2}}{k  \|S_k\|_2 l \|S_l\|_2} &=& \frac{1}{H_{n_j}^2} \left( \sum_{k = n_j+1}^{n_{j+1}} k^{-1} \right)^2 \\ &=& \frac{( \log n_{j+1} - \log n_j + \mathcal{O}(1))^2}{H_{n_j}^2} \le \frac{\mathcal{O}(1)}{j^2}. \end{eqnarray*}
The result follows by the Borel-Cantelli lemma, since $\mathbb{E}_m(G_j^2)$ is summable.\end{proof}

\section{Large deviations and empirical measure: an alternative approach}\label{secaltappr}
In the previous section, we gave some applications of Theorem~\ref{th:main}. In this section, we will show that one can use a direct approach to get large deviations estimates for observables in BV and also an upper bound on the Kantorovich distance.

\subsection{Large deviations estimates for observables in BV} \label{app:LD}

Proposition \ref{pro:ld_lip} gives exponential large deviations estimates for ergodic sums associated to Lipschitz observables, for sequential dynamical systems on $[0,1]$ that satisfy all the assumptions required by Theorem \ref{th:main}. As the strong law of large numbers holds for observables in BV under slighty weaker assumptions for the dynamical system, it is natural to ask whether large deviations estimates remain valid in this more general situation. The following proposition answers positively to this question:

\begin{Pro} \em Let $(T_n)$ be a sequential dynamical system on $[0,1]$ such that all maps $T_n$ belong to a class $\mathcal{C}$ that satisfies $(D_r)$ and {\em (Dec)}, and for which the condition {\bf (Min)} holds. Then, for any $f \in {\rm BV}$ and all $t > 0$, there exist $\tau = \tau(f) > 0$ and $C = C(f, t) > 0$ such that for all $n \ge 1$:

\[ m\left(\frac{1}{n} \sum_{k=0}^{n-1} \left[f \circ T_1^k - \int f \circ T_1^k \, dm \right] > t\right) \le C e^{- \tau n t^2}.\]

\end{Pro}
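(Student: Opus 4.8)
The plan is to mimic the classical moment-generating-function argument of Collet–Martinez–Schmitt, adapted to the non-autonomous setting, rather than to invoke Theorem~\ref{th:main} (which requires a Lipschitz observable). Fix $f \in {\rm BV}$ with $\int f\circ T_1^k\,dm$ subtracted off at each step, and set $g_k = f\circ T_1^k - \int f\circ T_1^k\,dm$, so $S_n = \sum_{k=0}^{n-1} g_k$. By the exponential Markov inequality it suffices to produce an estimate of the form $\E_m(e^{\lambda S_n}) \le e^{C(f)\lambda^2 n}$ for $\lambda$ in a fixed interval $(0,\lambda_0(f))$; optimizing $e^{-\lambda n t}\E_m(e^{\lambda S_n})$ over such $\lambda$ then yields $m(S_n/n > t) \le C e^{-\tau n t^2}$ for all $t>0$ (for $t$ large one just uses that $S_n/n$ is bounded by $\|f\|_\infty$, so the probability is eventually zero and the constant can be absorbed).

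First I would set up the transfer-operator machinery for the exponential moments. Writing $\E_m(e^{\lambda S_n})$ and peeling off coordinates, one is led to study the twisted/weighted operators and, more simply, to control $\int e^{\lambda S_n}\,dm$ via the identity $\int e^{\lambda S_n}\,dm = \int \bigl(\prod_{k=0}^{n-1} e^{\lambda g_k}\bigr) dm$. The standard trick is to expand $e^{\lambda g_k} = 1 + \lambda g_k + \lambda^2 h_k$ where $h_k$ is uniformly bounded in BV for $\lambda$ small (since $g_k = f\circ T_1^k - c_k$ with $\|f\circ T_1^k\|_{\rm BV}$ controlled — here one uses that $T_1^k$ is piecewise expanding so composition does not blow up the variation too badly, together with $f\in{\rm BV}$), multiply out the product, and track the resulting sum of multilinear terms. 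Each term is an integral of a product $g_{k_1}\cdots g_{k_j}$ times powers of $\lambda$; using the property \emph{(Dec)} together with \textbf{(Min)} (exactly as in the proof of Proposition~\ref{prop:emp}, equation~\eqref{eqdecemp}) one gets exponential decay $\theta^{\,\mathrm{gap}}$ in the gaps between consecutive indices, because each factor $g_{k_i}$ has zero mean when integrated against $P_1^{k_i}\mathds{1}$-type densities, and these densities are uniformly bounded in BV by (H3). Summing the geometric series in the gaps shows the combinatorial sum is $O((C\lambda^2 n)^{\text{something}})$ and re-sums to $e^{C(f)\lambda^2 n}$, provided $\lambda < \lambda_0(f)$.

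The main obstacle I anticipate is the bookkeeping of the multilinear expansion and proving the clustering bound uniformly in $n$: one must show that $\bigl|\int g_{k_1}\cdots g_{k_j}\,dm\bigr| \le C^j \prod_i \theta^{k_{i+1}-k_i}$ (or a similar telescoped product), with $C$ depending only on $\|f\|_{\rm BV}$ and the class $\mathcal{C}$, and then that the sum over all index tuples of $\lambda^{2j}$ times these bounds converges and is bounded by $e^{C(f)\lambda^2 n}$. This is the non-stationary analogue of the ``decay of correlations implies exponential moment bounds'' lemma; the non-autonomy forces one to insert the auxiliary densities $P_1^{k}\mathds{1}$ at each splitting point and to use that the whole family $(P_1^k\mathds{1})_k$ is bounded in BV, together with \textbf{(Min)} to divide by it. A clean way to organize this is by induction on $n$: writing $S_n = S_{n-1} + g_{n-1}$, express $\int e^{\lambda S_n} dm$ via the transfer operator $P_n$ acting on $e^{\lambda S_{n-1}}\cdot(\text{density})$, control the BV norm of the conditional exponential by a recursion of the form $a_n \le (1 + C\lambda^2) a_{n-1} + (\text{small correction from the mean-zero part, decaying})$, and conclude $a_n \le e^{C\lambda^2 n}$. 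Once the exponential moment bound is in hand, the large deviation inequality is immediate.
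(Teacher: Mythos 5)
Your overall strategy (exponential Markov inequality plus a moment-generating-function bound $\mathbb{E}_m(e^{\lambda S_n}) \le e^{C\lambda^2 n}$ for small $\lambda$) is a legitimate alternative route, and the final optimization step you describe is fine. The gap is in the central estimate. The clustering bound you propose, $\left|\int g_{k_1}\cdots g_{k_j}\,dm\right| \le C^j \prod_i \theta^{k_{i+1}-k_i}$, is false even in the stationary case: take $j=4$ with $k_1=k_2=0$ and $k_3=k_4=N$ large; then the integral converges to $\left(\int g^2\,dm\right)^2 > 0$ (up to the non-stationary analogue), while your bound forces decay $\theta^{N}$. Property \emph{(Dec)} only yields decoupling across \emph{one} chosen gap (the factor in front of the gap must be recentered, and the recentering constant re-enters the remaining product), so what one really gets are cumulant-type decoupling estimates, and passing from those to $e^{C\lambda^2 n}$ requires genuine combinatorial work (method of cumulants) that your sketch does not supply. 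The fallback recursion has the same problem: writing $\int e^{\lambda S_n}dm = \int e^{\lambda g_{n-1}} e^{\lambda S_{n-1}}dm$ and expanding, the first-order term $\lambda \int g_{n-1} e^{\lambda S_{n-1}}dm$ is not zero and is a priori only $O(\lambda)$ times the previous quantity, so the naive recursion gives $a_n \le (1+C\lambda)a_{n-1}$, i.e.\ $e^{C\lambda n}$, which is useless after optimization for small $t$; showing that this cross term is in fact $O(\lambda^2)$ uniformly is exactly the unproved crux, and it again requires tracking how \emph{(Dec)} acts on the BV norm of $P_1^{n-1}(e^{\lambda S_{n-1}})$ minus its mean, not just on two-point correlations as in \eqref{eqdecemp}.

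For comparison, the paper sidesteps all of this with a martingale approximation in the spirit of Conze--Raugi: with $Q_n g = P_n(g\,P_1^{n-1}\mathds{1})/P_1^n\mathds{1}$ one builds $h_n = \frac{1}{P_1^n\mathds{1}}\sum_{k<n} P_{k+1}^n(f_k P_1^k\mathds{1})$, which is bounded in sup norm because $f_k P_1^k\mathds{1}$ is a zero-mean, BV-bounded family (BV is a Banach algebra and $(P_1^k\mathds{1})_k$ is BV-bounded), \emph{(Dec)} makes the summands decay geometrically, and \textbf{(Min)} controls the denominator. Then $S_n = \sum_{k<n} U_k + h_n\circ T_1^n$ with $U_k$ bounded reversed-martingale differences, and Azuma--Hoeffding gives the stated bound directly. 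If you want to keep your MGF approach, you would need either this martingale decomposition anyway, or a block/perturbed-operator argument replacing the false product-over-gaps lemma; as written, the proposal does not close.
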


\begin{proof}
We follow the strategy employed in \cite{AFLV} Proposition 2.5, using the martingale approximation described in Section 5 of \cite{Conze_Raugi}. Write $f_n = f - \int f \circ T_1^n \, dm$ and define the operators $Q_n$, for $n \ge 1$, by 
$$Q_n g = \frac{P_n(g P_1^{n-1} \mathds{1})}{P_1^n \mathds{1}}.$$ 
Let $h_n$ defined by the relation $h_{n+1} = Q_{n+1} f_n + Q_{n+1} h_n$, with $h_0=0$. We get $$h_n = \frac{1}{P_1^n \mathds{1}} \sum_{k=0}^{n-1} P_{k+1}^n( f_k P_1^k \mathds{1}).$$

Since BV is a Banach algebra and $(P_1^k \mathds{1})_k$ is bounded in BV, we get that $f_k P_1^k \mathds{1}$ is a bounded sequence in BV of functions with integral $0$. As a consequence of {\em (Dec)}, the BV norm of the term $P_{k+1}^n(f_k P_1^k \mathds{1})$ decays exponentially fast with $n-k$, which implies, thanks to condition {\bf (Min)}, that the sequence $(h_n)$ is bounded for the supremum norm.

Define $\varphi_n = f_n + h_n - h_{n+1} \circ T_{n+1}$ and $U_n = \varphi_n \circ T_1^n$. We have that $(U_n)$ is a sequence of reversed martingales for the filtration $(\mathcal{F}_n)$, and is bounded for the supremum norm. Denoting by $S_n = \sum_{k=0}^{n-1} f_k \circ T_1^k = \sum_{k=0}^{n-1} \left[f \circ T_1^k - \int f \circ T_1^k \, dm \right]$, we have $$S_n = \sum_{k=0}^{n-1} U_k + h_n \circ T_1^n.$$

Then $m(S_n > n t)= m(\sum_{k=0}^{n-1} U_k +h_n \circ T_1^n > nt) \le m(\sum_{k=0}^{n-1} U_k > \frac{nt}{2})$ since $(h_n)$ is bounded for the supremum norm and thus $h_n \circ T_1^n < \frac{nt}{2}$ for all $n$ large enough. On the other hand, using the Azuma-Hoeffding inequality, there exists a constant $\tau = \tau(f)$ such that $ m(\sum_{k=0}^{n-1} U_k > \frac{nt}{2}) \le e^{- \tau n t^2}$, since $(U_n)$ is also bounded for the supremum norm. This proves the proposition.
\end{proof}

\subsection{Empirical measure and Kantorovich distance}
In Proposition~\ref{prop:emp}, an upper bound for the Kantorovich distance was obtained using Theorem~\ref{th:main}. Following Dedecker and Merlev\`ede \cite{DM}, we will prove this result for a larger class of sequential dynamical systems via an alternative approach. 

Recall that for a sequential dynamical systems $(T_n)$ on $[0,1]$, we define the empirical measure by $\mathcal{E}_n(x) = \frac{1}{n} \sum_{k=0}^{n-1} \delta_{T_1^k x}$ and the measure $m_n$ by $m_n = \frac{1}{n} \sum_{k=0}^{n-1} (T_1^k)_{\star}m$.

\begin{Pro} \em Let $(T_n)$ be a sequential dynamical system on $[0,1]$ such that all maps $T_n$ belong to a class $\mathcal{C}$ that satisfies $(D_r)$ and {\em (Dec)}, and for which the condition {\bf (Min)} holds. Then, there exists $C >0$ such that for all $t >0$ and all $n\ge 1$: 
\[m\left( \kappa(\mathcal{E}_n, m_n) > \frac{t}{\sqrt{n}} \right) \le 2 e^{-Ct^2}. \]
\end{Pro}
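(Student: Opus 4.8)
The plan is to follow the Dedecker--Merlev\`ede approach referenced in the statement, which rests on a coupling/martingale argument for the distribution functions rather than on the separately-Lipschitz machinery of Theorem~\ref{th:main}. First I would write $\kappa(\mathcal{E}_n, m_n) = \int_0^1 \left| \frac{1}{n} \sum_{k=0}^{n-1} \left( \chi_t \circ T_1^k - F_{(T_1^k)_\star m}(t) \right) \right| \, dt$, where $\chi_t = \mathds{1}_{[0,t]}$, exactly as in the proof of Proposition~\ref{prop:emp}. The key point is that this is a single nonnegative observable $W_n(x) := \kappa(\mathcal{E}_n(x), m_n)$, and what we need is a deviation bound for $W_n$ around its mean together with a control $\mathbb{E}_m(W_n) = \mathcal{O}(n^{-1/2})$; the latter was already established in the proof of Proposition~\ref{prop:emp} using only $(\mathrm{Dec})$ and the boundedness of $(P_1^k\mathds{1})_k$ in BV (inequality~\eqref{eqdecemp}), so it transfers verbatim to the present weaker hypotheses.

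For the deviation bound I would use the martingale approximation of Section~5 of \cite{Conze_Raugi}, applied uniformly in $t$, in the same spirit as in Subsection~\ref{app:LD}. Fix $t$ and set $f = \chi_t$; since $\{\chi_t\}_t$ is a bounded family in BV, the functions $h_n^{(t)}$ and $\varphi_n^{(t)}$ built as in that proof (with $f_k = \chi_t - \int \chi_t\circ T_1^k\,dm$) are bounded in the supremum norm \emph{uniformly in} $t$, by the same argument: $f_k P_1^k\mathds{1}$ is bounded in BV with zero integral uniformly in $t$ and $k$, $(\mathrm{Dec})$ gives exponential decay of $\|P_{k+1}^n(f_k P_1^k\mathds{1})\|_{\mathrm{BV}}$, and $(\mathbf{Min})$ bounds the denominator $P_1^n\mathds{1}$ from below. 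Writing $S_n^{(t)} = \sum_{k=0}^{n-1} U_k^{(t)} + h_n^{(t)}\circ T_1^n$ with $(U_k^{(t)})$ a reversed martingale bounded in sup-norm uniformly in $t$, one gets by Azuma--Hoeffding a bound $m\big( |S_n^{(t)}| > n s \big) \le 2 e^{-c n s^2}$ with $c$ independent of $t$. The remaining step is to upgrade this pointwise-in-$t$ bound to a bound on $\int_0^1 |S_n^{(t)}|/n \, dt = W_n$: here I would apply the bounded-difference/Azuma method to the functional $K_n(x_0,\dots,x_{n-1}) = \int_0^1 |\frac1n\sum_k(\chi_t(x_k) - F_{m_n}(t))|\,dt$, which has $\lip_j(K_n)\le 1/n$; but since Theorem~\ref{th:main} is not available under the weaker hypotheses, one instead runs the martingale-difference decomposition of $K_n - \mathbb{E}_m(K_n)$ along the reversed filtration $(\mathcal{F}_p)$ directly, using that the increments $D_p$ are controlled, as in the proof of Lemma~\ref{lem:eq}, by $(\mathrm{Dec})$ and $(\mathbf{Min})$ alone --- the auxiliary Lemmas~\ref{lem:dist}, \ref{lem:CMS} and Corollary~\ref{cor:var} (which used assumptions (2)--(3) on $\mathcal{C}$) are not needed here because $K_n$ is built from indicator observables $\chi_t$ whose relevant variation is controlled a priori, uniformly in $t$. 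This yields $m\big(W_n - \mathbb{E}_m(W_n) > s/\sqrt n\big) \le e^{-C s^2}$.

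Combining the two ingredients: for $t$ large enough that $t/\sqrt n - \mathbb{E}_m(W_n) \ge (t/2)/\sqrt n$ (possible since $\mathbb{E}_m(W_n) \le C'/\sqrt n$), we get $m(W_n > t/\sqrt n) \le m(W_n - \mathbb{E}_m(W_n) > (t/2)/\sqrt n) \le e^{-Ct^2/4}$; absorbing the small-$t$ range (where the right-hand side exceeds $1$ after adjusting constants) and the factor into the constant gives the stated $2 e^{-Ct^2}$ for all $t>0$ and all $n\ge 1$.

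I expect the main obstacle to be the uniformity in $t$: one must check carefully that every constant in the martingale construction and in the bounded-difference estimate can be taken independent of the parameter $t\in[0,1]$, which relies on $\{\chi_t\}_t$ being a family of functions with uniformly bounded BV norm (indeed $\|\chi_t\|_{\mathrm{BV}} \le 3$), and on the decorrelation and minoration constants being features of the sequence $(T_n)$ and the class $\mathcal{C}$ rather than of the observable. A secondary technical point is justifying the exchange between the $dt$-integral and the martingale decomposition; this is handled by Fubini and the dominated convergence of the reversed martingale in $L^1(\tilde m)$, uniformly controlled because $|S_n^{(t)}|/n \le 2$ pointwise.
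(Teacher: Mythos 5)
There is a genuine gap at the decisive step of your argument. After correctly recalling the mean bound $\E_m(\kappa(\mathcal{E}_n,m_n))=\mathcal{O}(n^{-1/2})$ and a per-$t$ Azuma bound for $S_n^{(t)}$, you reduce everything to a concentration estimate for the scalar functional $K_n$ obtained by "running the martingale-difference decomposition of $K_n-\E_m(K_n)$ along the reversed filtration, with the increments $D_p$ controlled as in Lemma~\ref{lem:eq} by \emph{(Dec)} and {\bf (Min)} alone", asserting that Lemmas~\ref{lem:dist}, \ref{lem:CMS} and Corollary~\ref{cor:var} are not needed because $K_n$ is built from the indicators $\chi_t$. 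This is exactly the point that fails: in the proof of Lemma~\ref{lem:eq} the bound on $D_p$ hinges on the BV norm of $f_i=P_1^iH_i$, and $H_i$ depends on the whole orbit segment $(y,T_1y,\dots,T_1^iy)$ through the absolute value inside the $dt$-integral, not on a single coordinate. Estimating ${\rm V}(f_i)$ therefore still requires the branchwise control $\sum_{I\in\mathcal{A}_1^i}\sup_I|(T_1^i)'|^{-1}$ and $\sum_{I}{\rm V}_I(|(T_1^i)'|^{-1})$, i.e.\ precisely Lemma~\ref{lem:CMS} and Corollary~\ref{cor:var}, which rest on the distortion bound $|T''|\le M$ and the bounded number of branches -- the hypotheses (2)--(3) that the present proposition drops. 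The uniform BV bound $\|\chi_t\|_{\rm BV}\le 3$ does not substitute for this, because under the $D_p$ route the transfer operators never act on $\chi_t P_1^k\mathds{1}$ directly; that only happens if one first moves the conditional expectations \emph{inside} the norm, which your decomposition does not do. Also note that your per-$t$ Azuma bound, as stated (a probability bound for each fixed $t$), cannot by itself be integrated in $t$ to give an exponential bound for $\int_0^1|S_n^{(t)}|\,dt$; some joint control in $t$ is indispensable.

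The paper's proof performs precisely the restructuring that your argument is missing: it regards $S_n(\cdot)$ as a random variable with values in the Hilbert space $L^2([0,1],dt)$, bounds $\kappa(\mathcal{E}_n,m_n)\le \frac1n\|S_n\|_{L^2(dt)}$, decomposes $S_n=\sum_{i=0}^{n-1}d_{i,n}$ via Yurinskii's martingale differences for the decreasing filtration $\mathcal{M}_i=\sigma(T_1^i)$, and bounds each $d_{i,n}$ pointwise by $\sum_{k\le i}C\theta^{i-k}/\delta$ using the identity $\E(\mathds{1}_{Z_k\le t}\mid\mathcal{M}_i)-\E(\mathds{1}_{Z_k\le t})=\frac{P_{k+1}^i((\chi_t-\E\chi_t(Z_k))P_1^k\mathds{1})}{P_1^i\mathds{1}}\circ T_1^i$ together with {\bf (Min)} and \emph{(Dec)} applied to the BV-bounded family $(\chi_t-\E\chi_t(Z_k))P_1^k\mathds{1}$; Pinelis' Azuma-type inequality in Hilbert spaces then gives $b_n^2\le Cn$ and the stated bound directly, with no separate estimate of $\E_m(\kappa)$ and no distortion lemmas. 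If you want to keep your scalar, per-$t$ strategy, the missing idea would be to pass to exponential moments and use Jensen in $t$ (since $dt$ is a probability measure), which your proposal does not carry out; as written, the claimed reduction to the $D_p$ machinery without hypotheses (2)--(3) is unjustified.
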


\begin{proof}
Let
\[S_n(t)=\sum_{k=0}^{n-1} \mathds{1}_{Z_k\leq t}-\mathbb{E}(\mathds{1}_{Z_k\leq t}).\]
Let $\|.\|_{L^p_m}$ denotes the $L^p$-norm under $m$, one can observe that
\[\kappa(\mathcal{E}_n,m_n)=\frac{1}{n}\|S_n\|_{L^1_m}\leq\frac{1}{n}\|S_n\|_{L^2_m}.\]
Let $\mathcal{M}_n=\{\emptyset,[0,1]\}$ be the trivial $\sigma$-algebra and let $\mathcal{M}_k=\sigma(T_1^k)$ for $0\leq k\leq n-1$. Thus, using Yurinskii's idea \cite{Yu}, since $\E(S_n|\mathcal{M}_n)=\E(S_n)=0$,  we have
\[S_n=\sum_{i=0}^{n-1}\E(S_n|\mathcal{M}_i)-\E(S_n|\mathcal{M}_{i+1})=\sum_{i=0}^{n-1}d_{i,n}\]
where
\[d_{i,n}(t)=\sum_{k=0}^{i}\E(\mathds{1}_{Z_k\leq t}|\mathcal{M}_i)-\E(\mathds{1}_{Z_k\leq t}|\mathcal{M}_{i+1}).\]
Thus, using Theorem 3 of \cite{Pi}, an Azuma-type inequality in Hilbert spaces, we get
\[m\left( \kappa(\mathcal{E}_n, m_n) > \frac{t}{\sqrt{n}} \right)\leq m\left( \frac{1}{\sqrt{n}}\|S_n\|_{2,m} > t \right) \le 2e^{-\frac{nt^2}{2b_n^2}} \]
where
\begin{equation} \label{eqn:bn} b_n^2=\sum_{i=0}^{n-1}\left\| \|d_{i,n}\|_{L^2_m}\right\|_\infty^2. \end{equation}
We thus only need to bound $b_n^2$. We observe that 
\begin{equation} \label{eqn:din} \|d_{i,n}\|_{L^2_m}\leq \sum_{k=0}^{i}\left\|\E(\mathds{1}_{Z_k\leq .}|\mathcal{M}_i)-\E(\mathds{1}_{Z_k\leq .})\right\|_{L^2_m}+\sum_{k=0}^{i}\left\|\E(\mathds{1}_{Z_k\leq .}|\mathcal{M}_{i+1})-\E(\mathds{1}_{Z_k\leq .})\right\|_{L^2_m} .\end{equation}
Now, one can use the properties of the transfer operator to show that
\[\E(\mathds{1}_{Z_k\leq t}|\mathcal{M}_i)-\E(\mathds{1}_{Z_k\leq t})=\frac{P_{k+1}^i\left((\chi_t-\E(\chi_t(Z_k)))P_1^k(\mathds{1})\right)}{P_1^i(\mathds{1})}\circ T_1^i,\]
moreover, using \textbf{(Min)},
\[\left\|\E(\mathds{1}_{Z_k\leq .}|\mathcal{M}_i)-\E(\mathds{1}_{Z_k\leq .})\right\|_{L^2_m}\leq \frac{\sup_{t\in[0,1]}|P_{k+1}^i\left((\chi_t-\E(\chi_t(Z_k)))P_1^k(\mathds{1})\right)|}{\delta}.\]
Finally, using {\em (Dec)} as in \eqref{eqdecemp}, we obtain
 \[\left\|\E(\mathds{1}_{Z_k\leq .}|\mathcal{M}_i)-\E(\mathds{1}_{Z_k\leq .})\right\|_{L^2_m}\leq\frac{C\theta^{i-k}}{\delta}.\]
Since $\sum_{k=0}^{i} \theta^{i-k}$ is bounded uniformly in $i$, from \eqref{eqn:bn} and \eqref{eqn:din} we get
 \begin{eqnarray*}
 b_n^2&\leq&\sum_{i=0}^{n-1} \sum_{i=0}^{n-1}\left\| \|d_{i,n}\|_{L^2_m}\right\|_\infty^2 \le C n \end{eqnarray*}
 which gives us,
 \[m\left( \kappa(\mathcal{E}_n, m_n) > \frac{t}{\sqrt{n}} \right)\leq 2 \textrm{ exp}\left(-C t^2\right)\]
and concludes the proof.
\end{proof}

\appendix

\section{Proof of a formula on conditional expectations} \label{app:formula}

In this appendix, we prove the following relation: 

\begin{equation}\label{eqkp}
 K_p(\underline{x}) = \frac{1}{P_1^p \mathds{1}(x_p)} \sum_{T_1^p y = x_p}  \frac{K(y, T_1 y, \ldots, T_1^{p-1} y, x_p, \ldots )}{| (T_1^p)'(y)|}, 
 \end{equation}

that holds for $\tilde{m}$-a.e. $\underline{x} \in X^{\mathbb{N}}$.

First, we recall some basic results on transfer operators that will be needed in the proof. Let $(X, \mathcal{A}, \mu)$ and $(Y, \mathcal{B}, \nu)$ be two probability spaces and let $T : X \to Y$ be a measurable non-singular map. Its associated transfer operator $P_T : L^1(\mu) \to L^1(\nu)$ is defined as follows: for $f \in L^1(\mu)$, $P_T(f)$ is the Radon-Nykodim derivative $\frac{d( \lambda \circ T^{-1})}{d \nu}$, where $d \lambda = f d \mu$.

It satisfies the following properties:

\begin{Lem} \label{lem:transfer_prop} \em For all $f \in L^1(\mu)$ and $g \in L^1(\nu)$, we have:

\begin{enumerate}

\item $\int_X f \, g \circ T d \mu = \int_Y P_T(f) \, g \, d\nu,$

\item $P_T(g \circ T) = g P_T \mathds{1},$

\item $ \mathbb{E}_{\mu}(f | T ^{-1} \mathcal{B}) = \left(\frac{P_T(f)}{P_T \mathds{1}} \right) \circ T,$ where $\frac{P_T(f)}{P_T \mathds{1}}$ is defined as $0$ on the set $\{P_T \mathds{1} = 0\}$. In particular, $\left(\frac{P_T(f_1)}{P_T(f_2)}\right) \circ T = \frac{\mathbb{E}_\mu(f_1 | T^{-1} \mathcal{B})}{\mathbb{E}_\mu(f_2| T^{-1} \mathcal{B}})$ for all $f_1, f_2 \in L^1(\mu)$.

\end{enumerate}

\end{Lem}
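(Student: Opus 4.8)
The plan is to take the defining Radon--Nikodym relation for $P_T$ as the starting point and to derive the three identities from it, the first being essentially a reformulation of that relation and the other two consequences of it. By definition one has $\int_B P_T(f)\, d\nu = \int_{T^{-1}B} f\, d\mu$ for every $B \in \mathcal{B}$; since $\mathds{1}_B \circ T = \mathds{1}_{T^{-1}B}$, this reads $\int_Y P_T(f)\,\mathds{1}_B\, d\nu = \int_X f\,(\mathds{1}_B\circ T)\, d\mu$, which is exactly assertion (1) for $g = \mathds{1}_B$. First I would prove (1) in full: by linearity it extends to simple $g$, then to nonnegative measurable $g$ by monotone convergence (treating $f \ge 0$ first, using that then $P_T f \ge 0$, and approximating $g$ from below by simple functions so that both sides increase to the limit), and finally to signed $f$ and to those $g \in L^1(\nu)$ for which both integrals are finite, by splitting into positive and negative parts.

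Granting (1), identity (2) follows by testing against an arbitrary bounded measurable $h$ on $Y$: applying (1) twice gives $\int_Y P_T(g\circ T)\, h\, d\nu = \int_X (g\circ T)(h\circ T)\, d\mu = \int_X \mathds{1}\cdot((gh)\circ T)\, d\mu = \int_Y (g\, P_T\mathds{1})\, h\, d\nu$, and since $h$ is arbitrary we conclude $P_T(g\circ T) = g\, P_T\mathds{1}$ $\nu$-a.e.

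For (3) I would first dispose of the exceptional set. Using (1) with $f = \mathds{1}$ and $g = \mathds{1}_{\{P_T\mathds{1} = 0\}}$ shows $\mu(T^{-1}\{P_T\mathds{1} = 0\}) = \int_{\{P_T\mathds{1} = 0\}} P_T\mathds{1}\, d\nu = 0$, so the pullback $(P_T f/P_T\mathds{1})\circ T$ is unaffected $\mu$-a.e. by the convention that $P_T f/P_T\mathds{1} = 0$ on $\{P_T\mathds{1}=0\}$, hence is well defined. The same computation applied to $f \ge 0$ gives $\int_{\{P_T\mathds{1}=0\}} P_T f\, d\nu = 0$, so $P_T f = 0$ $\nu$-a.e. on $\{P_T\mathds{1} = 0\}$ (signed $f$ by splitting). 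Then I would verify the two defining features of conditional expectation for $\psi := (P_T f/P_T\mathds{1})\circ T$: it is $T^{-1}\mathcal{B}$-measurable by construction, and for every $B \in \mathcal{B}$, using (1) with test function $g = (P_T f/P_T\mathds{1})\mathds{1}_B$, one gets $\int_{T^{-1}B}\psi\, d\mu = \int_Y P_T\mathds{1}\cdot(P_T f/P_T\mathds{1})\mathds{1}_B\, d\nu = \int_B P_T f\, d\nu = \int_{T^{-1}B} f\, d\mu$, the middle equality using that $P_T f$ vanishes where $P_T\mathds{1} = 0$. Since the sets $T^{-1}B$ exhaust $T^{-1}\mathcal{B}$, this identifies $\psi$ with $\mathbb{E}_\mu(f | T^{-1}\mathcal{B})$. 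The ``in particular'' statement is then pure arithmetic: dividing the two instances of the main identity for $f_1$ and $f_2$ cancels the common factor $(P_T\mathds{1})\circ T$ wherever $P_T f_2 \ne 0$.

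The routine parts are the approximation arguments in (1); the one genuinely delicate step is the bookkeeping around $\{P_T\mathds{1} = 0\}$ in (3) — checking that its $T$-preimage is $\mu$-null and that $P_T f$ vanishes there — since this is precisely what makes the quotient convention consistent with the conditional-expectation characterization and lets the final computation close without leftover boundary terms.
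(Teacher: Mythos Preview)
Your argument is correct and is the standard route: (1) from the Radon--Nikodym definition via indicators, simple functions, and monotone convergence; (2) by duality against bounded test functions; (3) by checking $T^{-1}\{P_T\mathds{1}=0\}$ is $\mu$-null, that $P_T f$ vanishes there, and then verifying the two defining properties of conditional expectation. The paper itself gives no proof of this lemma --- it is stated as a list of basic properties of transfer operators and left to the reader --- so there is nothing to compare against beyond noting that your proof supplies exactly the routine verification the authors omit.
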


We recall now our setting. Let $(T_n)_{n \ge 1}$ be a sequence of piecewise expanding maps of the unit interval $(X,\mathcal{F},m)$. We define $\Phi : X  \to X^{\mathbb{N}}$ by $\Phi(x) = (T_1^n x)_{n \ge 0}$. $P_1^p$ denotes the transfer operator of the map $T_1^p : X \to X$. $X^{\mathbb{N}}$ is endowed with the product $\sigma$-algebra $\tilde{\mathcal{F}}$ and the probability measure $\tilde{m}$ which is the image of $m$ by $\Phi$.

Let $\tilde{\mathcal{F}}_p$ be the $\sigma$-algebra on $X^{\mathbb{N}}$ of events depending only on the coordinates $(x_k)_{k \ge p}$. Clearly, $ \tilde{\mathcal{F}}_p = \sigma^{-p} \tilde{\mathcal{F}}$, where $ \sigma : X^{\mathbb{N}} \to X^{\mathbb{N}}$ is the shift map. Let $K : X^n \to \mathbb{R}$ be a separately Lipschitz function, that we extend as a function $K : X^{\mathbb{N}} \to \mathbb{R}$ belonging to $L^1(\tilde{m})$. Finally, we set $K_p = \mathbb{E}_{\tilde{m}}(K|\tilde{F}_p)$.

\begin{Lem} \label{lem:kp_identity} \em $K_p \circ \Phi = \left( \frac{P_1^p(K \circ \Phi)}{P_1^p \mathds{1}}\right) \circ T_1^p$, $m$-a.e.
\end{Lem}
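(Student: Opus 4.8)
**Proof plan for Lemma A.4 ($K_p \circ \Phi = \left( \frac{P_1^p(K \circ \Phi)}{P_1^p \mathds{1}}\right) \circ T_1^p$, $m$-a.e.)**

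The plan is to unwind the definition of conditional expectation on $X^{\mathbb{N}}$ by pushing everything down to $X$ through $\Phi$, and then apply property (3) of Lemma \ref{lem:transfer_prop} to the single map $T_1^p : X \to X$. First I would observe that $\tilde{\mathcal{F}}_p = \sigma^{-p}\tilde{\mathcal{F}}$ and that $\sigma^p \circ \Phi = \Phi' \circ T_1^p$, where $\Phi' : X \to X^{\mathbb{N}}$ is the analogous coordinate map built from the \emph{shifted} sequence of transformations $(T_{p+k})_{k \ge 1}$, i.e. $\Phi'(y) = (T_{p+1}^{p+k} y)_{k \ge 0}$; indeed the $k$-th coordinate of $\sigma^p \Phi(x)$ is $T_1^{p+k} x = T_{p+1}^{p+k}(T_1^p x)$. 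Hence $\Phi^{-1}\tilde{\mathcal{F}}_p = (T_1^p)^{-1}\big((\Phi')^{-1}\tilde{\mathcal{F}}\big)$, and in particular $\Phi^{-1}\tilde{\mathcal{F}}_p \subset (T_1^p)^{-1}\mathcal{F}$.

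The key step is the identity $\mathbb{E}_{\tilde m}(K | \tilde{\mathcal{F}}_p) \circ \Phi = \mathbb{E}_m(K \circ \Phi \,|\, \Phi^{-1}\tilde{\mathcal{F}}_p)$, which holds because $\tilde m = \Phi_\star m$: for any bounded $\tilde{\mathcal{F}}_p$-measurable $g$ on $X^{\mathbb{N}}$ one has $\int_X (K_p \circ \Phi)(g \circ \Phi)\,dm = \int_{X^{\mathbb{N}}} K_p\, g\, d\tilde m = \int_{X^{\mathbb{N}}} K g\, d\tilde m = \int_X (K\circ\Phi)(g\circ\Phi)\,dm$, and $g \circ \Phi$ ranges over a class generating $\Phi^{-1}\tilde{\mathcal{F}}_p$. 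Then, combining with the previous paragraph, I need to compare $\mathbb{E}_m(K\circ\Phi \,|\, \Phi^{-1}\tilde{\mathcal{F}}_p)$ with $\mathbb{E}_m(K\circ\Phi \,|\, (T_1^p)^{-1}\mathcal{F})$. The point is that the function $K \circ \Phi$ is already measurable with respect to the first $n$ coordinates, and since $p \ge n$ is not assumed, I should instead argue directly: by property (3) of Lemma \ref{lem:transfer_prop} applied to $T = T_1^p : (X,\mathcal{F},m) \to (X,\mathcal{F},m)$, $\mathbb{E}_m(K\circ\Phi \,|\, (T_1^p)^{-1}\mathcal{F}) = \left(\frac{P_1^p(K\circ\Phi)}{P_1^p\mathds{1}}\right)\circ T_1^p$, and I claim this already equals $\mathbb{E}_m(K\circ\Phi \,|\, \Phi^{-1}\tilde{\mathcal{F}}_p)$ — because the right-hand side is $(T_1^p)^{-1}\mathcal{F}$-measurable, hence $\Phi^{-1}\tilde{\mathcal{F}}_p$-measurable would need checking; more robustly, one verifies the defining averaging property of $\mathbb{E}_m(\cdot \mid \Phi^{-1}\tilde{\mathcal F}_p)$ directly against test functions of the form $(h \circ T_1^p)\cdot(g\circ\Phi)$ is not needed if instead we note $\Phi^{-1}\tilde{\mathcal F}_p = (T_1^p)^{-1}\big((\Phi')^{-1}\tilde{\mathcal F}\big)$ and the function $\left(\frac{P_1^p(K\circ\Phi)}{P_1^p\mathds 1}\right)\circ T_1^p$ is of the form $\psi \circ T_1^p$; tower property then reduces the check to: $\mathbb{E}_m(K\circ\Phi\mid (T_1^p)^{-1}\mathcal F)$ is itself already $\Phi^{-1}\tilde{\mathcal F}_p$-measurable, equivalently $\psi := \frac{P_1^p(K\circ\Phi)}{P_1^p\mathds 1}$ is $(\Phi')^{-1}\tilde{\mathcal F}$-measurable, i.e. a function of $y' := T_1^p x$ only, which is clear since $P_1^p(K\circ\Phi)$ and $P_1^p\mathds 1$ are honest functions on $X$.

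Finally, the explicit formula \eqref{eqkp} follows by substituting $x_p = T_1^p y$ and using the standard pointwise expression $P_1^p f(z) = \sum_{T_1^p y = z} \frac{f(y)}{|(T_1^p)'(y)|}$ for the transfer operator of the piecewise expanding map $T_1^p$, applied to $f = K \circ \Phi$, together with $(K\circ\Phi)(y) = K(y, T_1 y, \ldots, T_1^{p-1}y, T_1^p y, \ldots)$; since $K$ depends only on the first $n$ coordinates this agrees with $K(y, T_1 y, \ldots, T_1^{p-1}y, x_p, \ldots)$ once $x_p = T_1^p y$. I expect the main obstacle to be the careful bookkeeping of $\sigma$-algebras — specifically, making rigorous that $\Phi^{-1}\tilde{\mathcal F}_p$ and $(T_1^p)^{-1}\mathcal F$ yield the same conditional expectation of $K\circ\Phi$, which hinges on the measurability of $\psi$ as a function on the target copy of $X$ and on the shift-intertwining relation $\sigma^p\circ\Phi = \Phi'\circ T_1^p$; everything else is a routine change of variables in the transfer operator.
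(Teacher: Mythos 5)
Your argument is correct, and it reaches the identity by a genuinely different (though closely parallel) route from the paper. The paper works upstairs on $X^{\mathbb{N}}$ with transfer operators: it writes $K_p = \left(\frac{P_\sigma^p K}{P_\sigma^p \mathds{1}}\right)\circ\sigma^p$, uses $P_\Phi(K\circ\Phi)=K$ and the intertwining $\sigma^p\circ\Phi=\Phi_p\circ T_1^p$ (your $\Phi'$ is the paper's $\Phi_p$) to get $P_\sigma^p P_\Phi = P_{\Phi_p}P_1^p$ --- taking care that $\Phi_p$ need not be non-singular, so $P_{\Phi_p}$ is only defined on the range of $P_1^p$ --- then applies Lemma \ref{lem:transfer_prop}(3) to $\Phi_p$ and concludes with the remark $(\Phi_p)^{-1}\tilde{\mathcal{F}}=\mathcal{F}$. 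You instead stay at the level of conditional expectations: pull back through $\Phi$ via $\mathbb{E}_{\tilde m}(K\mid\tilde{\mathcal{F}}_p)\circ\Phi=\mathbb{E}_m(K\circ\Phi\mid\Phi^{-1}\tilde{\mathcal{F}}_p)$ (valid since $\tilde m=\Phi_\star m$), identify the conditioning $\sigma$-algebra, and apply Lemma \ref{lem:transfer_prop}(3) only once, to the single map $T_1^p$. This avoids the ill-defined operator $P_{\Phi_p}$ and the identity $P_\sigma^p P_\Phi=P_{\Phi_p}P_1^p$ altogether, so it is arguably more elementary; the paper's version keeps the computation inside the transfer-operator formalism used throughout. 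Two tightenings you should make. First, the measurability fact you invoke at the end --- that $\psi=\frac{P_1^p(K\circ\Phi)}{P_1^p\mathds{1}}$ is $(\Phi')^{-1}\tilde{\mathcal{F}}$-measurable --- does not follow merely from $\psi$ being a function on $X$; it needs $\mathcal{F}\subseteq(\Phi')^{-1}\tilde{\mathcal{F}}$, which holds because the zeroth coordinate of $\Phi'$ is the identity ($\pi_0\circ\Phi'=\mathrm{Id}$), and this is exactly the paper's closing observation $(\Phi_p)^{-1}\tilde{\mathcal{F}}=\mathcal{F}$. Second, once you have that equality you can drop the tower-property detour entirely: $\Phi^{-1}\tilde{\mathcal{F}}_p=(\sigma^p\circ\Phi)^{-1}\tilde{\mathcal{F}}=(T_1^p)^{-1}\bigl((\Phi')^{-1}\tilde{\mathcal{F}}\bigr)=(T_1^p)^{-1}\mathcal{F}$, and Lemma \ref{lem:transfer_prop}(3) applied to $T_1^p$ gives the statement directly.
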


\begin{proof} 

By Lemma \ref{lem:transfer_prop} (3), we have $K_p = \mathbb{E}_{\tilde{m}}(K| \sigma^{-p} \tilde{\mathcal{F}}) = \left( \frac{P_{\sigma}^p K}{ P_{\sigma}^p \mathds{1}} \right) \circ \sigma^p$. Introduce the map $\Phi_p : X \to X^{\mathbb{N}}$ defined by $\Phi_p(x) = (x, T_{p+1} x, T_{p+1}^{p+2} x, \ldots) = (T_{p+1}^{p+n} x)_{n \ge 0}$. This map satisfies  $\sigma^p \circ \Phi = \Phi_p \circ T_1^p$. It is not non-singular and then its transfer operator is not well defined, but the previous relation implies that $P_{\Phi_p}(P_1^p f)$ is well defined for all $f \in L^1(m)$, is equal to $P_\sigma^p P_\Phi f$ and satisfies all the properties described in Lemma \ref{lem:transfer_prop}.

We then have 
$$K_p \circ \Phi = \left( \frac{P_{\sigma}^p K}{P_{\sigma}^p \mathds{1}} \right) \circ \Phi_p \circ T_1^p = \left( \frac{P_{\sigma}^p P_{\Phi} (K \circ \Phi)}{P_{\sigma}^p P_{\Phi} \mathds{1}} \right) \circ \Phi_p \circ T_1^p,$$
by Lemma \ref{lem:transfer_prop} (2), since $P_{\Phi} \mathds{1} = \mathds{1}$.

Moreover, since $P_{\sigma}^p P_{\Phi} = P_{\Phi_p} P_1^p$, one can use Lemma \ref{lem:transfer_prop} (3) to obtain 
$$K_p \circ \Phi = \left( \frac{P_{\Phi_p}P_1^p (K \circ \Phi)}{P_{\Phi_p} P_1^p \mathds{1}} \right) \circ \Phi_p \circ T_1^p = \left( \frac{( (P_{\Phi_p} \mathds{1}) \circ \Phi_p) \mathbb{E}_m(P_1^p(K \circ \Phi)| (\Phi_p)^{-1} \tilde{ \mathcal{F}}) }{( (P_{\Phi_p} \mathds{1}) \circ \Phi_p)\mathbb{E}_m(P_1^p \mathds{1})| (\Phi_p)^{-1} \tilde{ \mathcal{F}}) } \right) \circ T_1^p.$$

The lemma is proved as soon as we remark that $(\Phi_p)^{-1} \tilde{\mathcal{F}} = \mathcal{F}$.
\end{proof}

Finally, this lemma implies \eqref{eqkp}. Indeed, if we define $\pi_p : X^{\mathbb{N}} \to X$ to be the map that sends $\underline{x}$ to its $p$-th coordinate $x_p$, then $T_1^p = \pi_p \circ \Phi$, and the lemma asserts that $K_p \circ \Phi =  \left( \frac{P_1^p(K \circ \Phi)}{P_1^p \mathds{1}}\right) \circ \pi_p \circ \Phi$, $m$-a.e. 

This implies that $K_p = \left( \frac{P_1^p(K \circ \Phi)}{P_1^p \mathds{1}}\right) \circ \pi_p$, $\tilde{m}$-a.e., and proves the relation, since $$\left( \frac{P_1^p(K \circ \Phi)}{P_1^p \mathds{1}}\right) \circ \pi_p(\underline{x}) = \frac{1}{P_1^p \mathds{1}(x_p)} \sum_{T_1^p y = x_p}  \frac{K(y, T_1 y, \ldots, T_1^{p-1} y, x_p, \ldots )}{| (T_1^p)'(y)|}.$$

To see that, set $A = \{K_p \circ \Phi =  \left( \frac{P_1^p(K \circ \Phi)}{P_1^p \mathds{1}}\right) \circ \pi_p \circ \Phi\}$. We have $A \in \mathcal{F}$, and $m(A) = 1$ by Lemma \ref{lem:kp_identity}. Since $(X, \mathcal{F})$ and $(X^{\mathbb{N}}, \tilde{\mathcal{F}})$ are both Polish spaces endowed with their Borel $\sigma$-algebras, and the map $\Phi : X \to X^{\mathbb{N}}$ is injective and measurable, we obtain that $\Phi(A) \in \tilde{\mathcal{F}}$ by Theorem 8.3.7 in \cite{Cohn}. Then $\tilde{m}(\Phi(A)) = m(\Phi^{-1}(\Phi(A))) \ge m(A) = 1$, which shows that $K_p = \left( \frac{P_1^p(K \circ \Phi)}{P_1^p \mathds{1}}\right) \circ \pi_p$, $\tilde{m}$-a.e., since the set on which this relation holds contains $\Phi(A)$.

\section*{Acknowledgments} The authors would like to thank the anonymous referee for useful comments and in particular for bringing to our knowledge the paper \cite{DM} and for suggesting another proof for the upper bound on the Kantorovich distance. RA is also grateful to S\'ebastien Gou\"{e}zel who suggested him an improvement for Proposition \ref{pro:shadow}. RA would like to thank Carlangelo Liverani for a discussion on the condition {\bf (Min)} and for suggesting him to use his paper \cite{Liv95} and Sandro Vaienti for several discussions and constant encouragement. RA was partially supported by Conseil R\'egional Provence-Alpes-C\^ote d'Azur, the ANR-Project {\em Perturbations}, by the PICS (Projet International de Coop\'eration Scientifique), Propri\'et\'es statistiques des syst\`emes dynamiques deterministes et al\'eatoires, with the University of Houston, n. PICS05968 and by the European Advanced Grant Macroscopic Laws and Dynamical Systems (MALADY) (ERC AdG 246953). Most of this work was done when RA was affiliated to Aix Marseille Universit\'e, CNRS, CPT, UMR 7332, 13288 Marseille, France and Universit\'e de Toulon, CNRS, CPT, UMR 7332, 83957 La Garde, France. RA would also like to acknowledge the Newton Institute of Mathematics at Cambridge, where part of this work was done.  JR was partially supported by CNPq and FAPESB. JR would also like to acknowledge the Universit\'e de Toulon, where part of this work was done.

\end{document}